\newtheorem{theorem}{Theorem}[section]
\newtheorem{proposition}[theorem]{Proposition}
\newtheorem{corollary}[theorem]{Corollary}
\newtheorem{lemma}[theorem]{Lemma}
\newtheorem*{remark}{Remark}
\numberwithin{equation}{section}
\newcommand{\bZ}{\mathbb{Z}}
\newcommand{\bN}{\mathbb{N}}
\newcommand{\bM}{\mathbb{N^{*}}}
\title{Some counting formulas for $\lambda$-quiddities over the rings $\bZ/2^{m}\bZ$}
\author{Flavien Mabilat}
\date{}
\subjclass[2020]{05E99, 20H05}
\keywords{$\lambda$-quiddity; modular group; rings $\bZ/2^{m}\bZ$, Coxeter's friezes}
\email{flavien.mabilat@univ-reims.fr}
\begin{document}

\maketitle

\selectlanguage{french}
\begin{abstract}
Les $\lambda$-quiddités de taille $n$ sont des $n$-uplets d'éléments d'un ensemble fixé, solutions d'une équation matricielle apparaissant lors de l'étude des frises de Coxeter. Leur nombre et leurs propriétés sont intimement liés à la structure et au cardinal de l'ensemble choisi. L'objectif principal de ce texte est d'obtenir une formule explicite donnant le nombre de $\lambda$-quiddités de taille impaire, et un encadrement du nombre de $\lambda$-quiddités de taille paire, sur les anneaux $\mathbb{Z}/2^{m}\mathbb{Z}$ ($m \geq 2$). On donnera également des formules explicites concernant le nombre de $\lambda$-quiddités de taille $n$ sur $\mathbb{Z}/8\mathbb{Z}$.
\end{abstract}

\selectlanguage{english}
\begin{abstract}
The $\lambda$-quiddities of size $n$ are $n$-tuples of elements of a fixed set, solutions of a matrix equation appearing in the study of Coxeter's friezes. Their number and their properties are closely linked to the structure and the cardinality of the chosen set. The main objective of this text is to obtain an explicit formula giving the number of $\lambda$-quiddities of odd size, and a lower and upper bound for the number of $\lambda$-quiddities of even size, over the rings $\bZ/2^{m}\bZ$ ($m \geq 2$). We also give explicit formulas concerning the number of $\lambda$-quiddities of size $n$ over $\bZ/8\bZ$.
\end{abstract}

\ \\
\begin{flushright}
\textit{\og Sauf erreur, je ne me trompe jamais \fg} 
\\Alexandre Vialatte, \textit{Chroniques de La Montagne.}
\end{flushright}
\ \\

\section{Introduction}
\label{Intro}

Coxeter's friezes are mathematical objects which are closely linked to many topics (see for example \cite {Mo1}). They have been introduced at the beginning of the 1970s by the British mathematician H.\ S.\ M.\ Coxeter (see \cite{Cox}) and are defined as tables of numbers, belonging to a fixed set, having a finite number of lines of infinite length, arranged with an offset, and for which some arithmetic relations are verified. One of the main elements of the study of Coxeter's friezes is the resolution of the following equation over the chosen set :
\[M_{n}(a_1,\ldots,a_n):=\begin{pmatrix}
    a_{n} & -1 \\[4pt]
     1 & 0
    \end{pmatrix}
\begin{pmatrix}
    a_{n-1} & -1 \\[4pt]
     1 & 0
    \end{pmatrix}
    \cdots
    \begin{pmatrix}
    a_{1} & -1 \\[4pt]
     1 & 0
     \end{pmatrix}=-Id.\]
 
\noindent In particular, the intervention of the matrices $M_{n}(a_1,\ldots,a_n)$ is very interesting since they are involved in the study of many other mathematical objects, such as "negative" continued fractions, or the discrete Sturm-Liouville equations.
\\
\\ \indent Furthermore, the study of the previous equation naturally leads to consider the generalized equation below over a subset $R$ of a commutative and unitary ring $A$ :
\begin{equation}
\label{p}
\tag{$E_{R}$}
M_{n}(a_1,\ldots,a_n)=\pm Id.
\end{equation}
We will say that a solution $(a_1,\ldots,a_n)$ of \eqref{p} is a $\lambda$-quiddity of size $n$ over $R$ (if there is no ambiguity we will omit the set over which we are working) and our goal is to study these objects over different sets. There are several ways to achieve this objective. For example, we can try to find a recursive construction and a combinatorial description of the solutions. For instance, we have precise results about the solutions of $(E_{\mathbb{N}^{*}})$ (see \cite{O}). We can also define a notion of irreducible solutions and study them (see for example \cite{CH,M1,M2}). However, we can also, and this is what we will do here, look for general informations, such as the number of solutions of fixed size. In this direction, we already have formulas for $R=\mathbb{N}^{*}$ (see \cite{CO}) and for $R=\mathbb{F}_{q}$. We will recall in details the results obtained in this case. For $q$ the power of a prime number $p$, $B \in SL_{2}(\mathbb{F}_{q})$ and $n \in \mathbb{N}^{*}$, we define
$u_{n,q}^{+}:=|\{(a_{1},\ldots,a_{n})\in\mathbb{F}_{q}^{n},~M_{n}(a_{1},\ldots,a_{n})=Id\}|$ and $u_{n,q}^{-}:=|\{(a_{1},\ldots,a_{n})\in\mathbb{F}_{q}^{n},~M_{n}(a_{1},\ldots,a_{n})=-Id\}|$. Moreover, if $m \in \bM$ and $k \geq 2$, we write $[m]_{k}:=\frac{k^{m}-1 }{k-1}$ and $\binom{m}{2}_{k}:=\frac{(k^{m}-1)(k^{m-1}-1)}{(k-1)(k^{2}-1)}.$

\begin{theorem}[Morier-Genoud,~\cite{Mo2} Theorem 1]
\label{11}

Let $q$ be the power of a prime number $p$ and $n>4$.
\\
\\i) If $n$ is odd then $u_{n,q}^{-}=\left[\frac{n-1}{2}\right]_{q^{2}}$.
\\
\\ii) If $n$ is even then there exists $m \in \bM$ such that $n=2m$.
\begin{itemize}

\item If $p=2$, $u_{n,q}^{-}=(q-1)\binom{m}{2}_{q}+q^{m-1}$.
\item If $p>2$ and $m$ even we have: $u_{n,q}^{-}=(q-1)\binom{m}{2}_{q}$.
\item If $p>2$ and $m \geq 3$ odd we have: $u_{n,q}^{-}=(q-1)\binom{m}{2}_{q}+q ^{m-1}$.

\end{itemize}

\end{theorem}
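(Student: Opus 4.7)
My starting point would be the continuant interpretation of $M_n$. Setting $K_0 = 1$, $K_{-1} = 0$ and $K_j = a_j K_{j-1} - K_{j-2}$, a direct induction on $n$ yields
\[
M_n(a_1,\ldots,a_n) = \begin{pmatrix} K(a_1,\ldots,a_n) & -K(a_2,\ldots,a_n) \\ K(a_1,\ldots,a_{n-1}) & -K(a_2,\ldots,a_{n-1}) \end{pmatrix}.
\]
Using $\det M_n = 1$, the equation $M_n = -Id$ becomes three polynomial conditions on $(a_1,\ldots,a_n) \in \mathbb{F}_q^n$: the simultaneous vanishing $K(a_1,\ldots,a_{n-1}) = K(a_2,\ldots,a_n) = 0$ together with $K(a_2,\ldots,a_{n-1}) = 1$. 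A naive dimension count predicts a leading term of order $q^{n-3}$, consistent with both formulas, and the problem reduces to counting tuples realising this overlapping-continuants condition exactly.

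For odd $n = 2k+1$, the target cardinality $[k]_{q^2} = |\mathbb{P}^{k-1}(\mathbb{F}_{q^2})|$ strongly suggests a projective interpretation via a quadratic extension. I would diagonalise $L_a := \begin{pmatrix} a & -1 \\ 1 & 0 \end{pmatrix}$ over $\mathbb{F}_{q^2}$ and translate the product equation $L_{a_n}\cdots L_{a_1} = -Id$ into a single multiplicative relation among eigenvalues in $\mathbb{F}_{q^2}^\times$, whose solution set should be naturally identified with $\mathbb{P}^{k-1}(\mathbb{F}_{q^2})$. A more elementary alternative is the inductive recursion $u_{2k+1,q}^- = q^2\,u_{2k-1,q}^- + 1$, obtained by inserting a length-$2$ block of free parameters into each smaller solution plus one exceptional contribution (consistent with $u_{3,q}^- = 1$, the unique solution $(1,1,1)$).

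For even $n = 2m$, the dependence on $p$ and on $m \bmod 2$ indicates a quadratic obstruction. I would split the enumeration into a principal family of cardinality $(q-1)\binom{m}{2}_q$, counted as a flag count in $\mathbb{F}_q^m$ weighted by a rescaling in $\mathbb{F}_q^\times$, and an exceptional family of size either $0$ or $q^{m-1}$ whose presence is controlled by whether a specific quadratic polynomial over $\mathbb{F}_q$ admits a root. The palindromic involution $(a_1,\ldots,a_n) \mapsto (a_n,\ldots,a_1)$, which preserves $\lambda$-quiddities, should reduce the $n = 2m$ equation to one on length-$m$ sequences. The main obstacle is exactly this three-way case split; I expect to treat $p = 2$ separately (where the sign distinction collapses and additive bookkeeping replaces multiplicative sign-tracking) and odd $p$ via a trace-parity argument on the reduced length-$m$ equation whose solvability toggles with $m \bmod 2$, cross-checking against the small cases $n = 5, 6, 7, 8$.
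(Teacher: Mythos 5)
First, a point of comparison: the paper does not prove Theorem \ref{11} at all --- it is quoted from Morier-Genoud \cite{Mo2} (with an alternative proof in \cite{S}), so there is no internal argument to measure your plan against. Morier-Genoud's actual proof proceeds by identifying the solution set with the $\mathbb{F}_q$-points of a moduli space of point configurations in the projective line and counting those points; \cite{S} argues via Farey complexes; and the companion result over $\mathbb{F}_q$ (Theorem \ref{12}, from \cite{CM}) is obtained by elementary ``surgery'' recursions of exactly the kind used in Section \ref{proof} of this paper (Lemma \ref{form} and Proposition \ref{22}). Your continuant reformulation and the $q^{n-3}$ heuristic are correct, and your observation that $[k]_{q^2}=|\mathbb{P}^{k-1}(\mathbb{F}_{q^2})|$ points in the right direction, but as written the proposal has two genuine gaps where all of the difficulty is concentrated.

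For odd $n=2k+1$, the recursion $u_{2k+1,q}^{-}=q^{2}u_{2k-1,q}^{-}+1$ is consistent with the closed formula, but no mechanism is supplied: inserting ``a length-$2$ block of free parameters'' is not shown to produce solutions, to be injective, or to be surjective onto the length-$(2k+1)$ solutions, and the ``$+1$ exceptional contribution'' is unexplained. (The viable elementary route is the one in Proposition \ref{22}: condition on whether a fixed entry such as $a_2$ is zero and reduce the length by $1$ or $2$ via Lemma \ref{form} iii)--iv); over $\mathbb{F}_q$ every nonzero element is a unit, which changes the bookkeeping relative to $\bZ/2^m\bZ$ and does not obviously yield your two-term recursion.) Likewise, diagonalising each $L_{a_i}$ over $\mathbb{F}_{q^2}$ does not convert a product of non-commuting matrices into ``a single multiplicative relation among eigenvalues.'' For even $n=2m$ --- the only case where the statement has real arithmetic content, with its dependence on $p$ and on $m\bmod 2$ --- your description merely restates the shape of the answer: the ``principal family'' of size $(q-1)\binom{m}{2}_{q}$, the ``exceptional family'' of size $0$ or $q^{m-1}$, and the ``specific quadratic polynomial'' are never identified. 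Moreover, the palindromic involution $(a_1,\ldots,a_n)\mapsto(a_n,\ldots,a_1)$ does preserve solutions (conjugate-transpose by $\mathrm{diag}(1,-1)$), but it only pairs off non-palindromic solutions and hence yields a parity constraint, not a reduction of the length-$2m$ equation to a length-$m$ one. As it stands this is a plausible research plan rather than a proof; the cleanest repair is either to follow the moduli-space count of \cite{Mo2} or to set up the full surgery recursion over $\mathbb{F}_q$ and solve it with the correct initial data, treating $p=2$ separately.
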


\noindent Another proof of this result can also be found in \cite{S}.

\begin{theorem}[\cite{CM} Theorem 1.1]
\label{12}

Let $q$ be the power of a prime number $p>2$ and $n\in\mathbb{N}$, $n>4$.
\\
\\i) If $n$ is odd then we have $u_{n,q}^{+}=u_{n,q}^{-}=\left[\frac{n-1}{2}\right]_{q^{2}}$.
\\
\\
ii) If $n$ is even then there exists $m \in \bM$ such that $n=2m$.
\begin{itemize}
\item If $m$ is even we have: $u_{n,q}^{+}=(q-1)\binom{m}{2}_{q}+q^{m-1}$.
\item If $m \geq 3$ is odd we have: $u_{n,q}^{+}=(q-1)\binom{m}{2}_{q}$.
\end{itemize}

\end{theorem}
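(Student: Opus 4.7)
For part (i), I would exploit a sign-flipping bijection. Writing $D=\begin{pmatrix}1&0\\0&-1\end{pmatrix}$, a direct calculation yields $M(-a)=-D\,M(a)\,D$, and multiplying over $n$ factors gives
\[
M_n(-a_1,\dots,-a_n)=(-1)^n\,D\,M_n(a_1,\dots,a_n)\,D.
\]
When $n$ is odd this equals $-D\,M_n(a_1,\dots,a_n)\,D$, and because $D(\pm Id)D=\pm Id$, the involution $(a_1,\dots,a_n)\mapsto(-a_1,\dots,-a_n)$ exchanges the solutions of $M_n=Id$ and $M_n=-Id$. Combined with Theorem~\ref{11}(i), this settles part (i).

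For part (ii), the same sign-flip becomes trivial on each solution set (now $(-1)^n=1$ and $D(\pm Id)D=\pm Id$), so it carries no information. Since Theorem~\ref{11}(ii) already provides $u_{n,q}^-$ in each sub-case, it suffices to compute the signed difference $\Delta_{n,q}:=u_{n,q}^+-u_{n,q}^-$ and to establish, for $n=2m\geq 6$, the clean formula $\Delta_{n,q}=(-1)^{m}q^{m-1}$. Inspecting the two columns of Theorem~\ref{11}(ii) and of the present statement confirms that adding $\Delta_{n,q}$ to $u_{n,q}^-$ precisely swaps the asymmetric term $\pm q^{m-1}$, so this reduction is equivalent to the theorem.

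My plan for obtaining $\Delta_{n,q}$ is to rely on the contraction identity
\[
M(x)\,M(0)\,M(y)=-M(x+y),
\]
verified by direct multiplication. If a solution of $M_n=\varepsilon\,Id$ has $a_i=0$ at an \emph{interior} index $2\leq i\leq n-1$, the identity rewrites it as a solution of $M_{n-2}=-\varepsilon\,Id$, flipping the sign; weighting solutions by $\varepsilon$ then turns every interior zero into a sign-flip, allowing one to pair off contributions inductively. The ``fully-zero'' tuple $(0,\dots,0)$ satisfies $M_n=(-1)^m Id$ and so contributes exactly the expected $(-1)^m$ to the signed count. The main obstacle is the careful treatment of the boundary cases $a_1=0$ or $a_n=0$, where the identity does not directly apply: here one must instead use $M(0)\,M(a)=\begin{pmatrix}-1&0\\a&-1\end{pmatrix}$ (or its transpose) and a finer case analysis of the penultimate factor, combined with inclusion--exclusion on the support of zero coordinates, to isolate the surviving contribution $q^{m-1}$. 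If this elementary bookkeeping proves too delicate, an alternative uniform route is character-theoretic: express $\Delta_{n,q}$ as a Fourier sum over irreducible representations of $SL_2(\mathbb{F}_q)$ with non-trivial central character, reducing the computation to the $n$-th power of the transfer element $\sum_{a\in\mathbb{F}_q}M(a)$ in the group algebra.
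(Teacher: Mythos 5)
First, a remark on the comparison you asked for: the paper does not prove Theorem~\ref{12} at all --- it is imported verbatim from \cite{CM} --- so there is no in-paper proof to measure your attempt against. I therefore assess the proposal on its own terms, using the paper's toolkit (Proposition~\ref{21}, Lemma~\ref{form}) as the natural reference.

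Part (i) of your argument is correct and complete. The identity $M(-a)=-D\,M(a)\,D$ checks out, the resulting involution $(a_i)\mapsto(-a_i)$ is exactly the bijection $\varphi_n$ of Proposition~\ref{21}~ii), and combining it with Theorem~\ref{11}(i) gives $u_{n,q}^{+}=u_{n,q}^{-}=\left[\frac{n-1}{2}\right]_{q^{2}}$ for odd $n$. Your reduction of part (ii) to the single identity $\Delta_{n,q}=u_{n,q}^{+}-u_{n,q}^{-}=(-1)^{m}q^{m-1}$ is also correct: I verified that adding this quantity to the two sub-cases of Theorem~\ref{11}(ii) for $p>2$ reproduces the claimed formulas.

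The gap is in the derivation of $\Delta_{n,q}=(-1)^{m}q^{m-1}$ itself, which you only sketch. Concretely: (a) the ``pair off contributions inductively'' step is not a defined sign-reversing involution --- the contraction $M(x)M(0)M(y)=-M(x+y)$ sends a length-$n$ tuple with an interior zero to a length-$(n-2)$ tuple in a $q$-to-one fashion, and a tuple may have several interior zeros, so without a precise inclusion--exclusion the cancellation is not established; (b) tuples with \emph{no} zero coordinate are never addressed, and nothing in the sketch shows their signed count vanishes (it cannot vanish wholesale, since something must produce the surviving $q^{m-1}$); (c) the boundary cases $a_1=0$, $a_n=0$ are explicitly left open; (d) the all-zero tuple contributes $(-1)^m$, not $(-1)^m q^{m-1}$, so your accounting is short by a factor of $q^{m-1}$ that the sketch does not generate. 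The character-theoretic alternative is only named, not executed. A clean repair stays much closer to the paper's own machinery: condition on the single coordinate $a_2$. For $n=2m$ even and $u\neq 0$, Proposition~\ref{21}~i) (with $\lambda=u$) followed by Lemma~\ref{form}~i) gives a bijection $\{a_2=u\}\cong\Omega_{n-1}^{\epsilon}$, while for $a_2=0$ the contraction gives $|\{a_2=0\}|=q\,u_{n-2,q}^{-\epsilon}$; hence $u_{n,q}^{\epsilon}=(q-1)u_{n-1,q}^{\epsilon}+q\,u_{n-2,q}^{-\epsilon}$, so by your part (i) the $(q-1)$-terms cancel in the difference and $\Delta_{n,q}=-q\,\Delta_{n-2,q}$. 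With $\Delta_{4,q}=(2q-1)-(q-1)=q$ this yields $\Delta_{2m,q}=(-q)^{m-2}\cdot q=(-1)^{m}q^{m-1}$, completing part (ii).
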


In this text, we will consider the case of the rings $\bZ/N\bZ$, that is to say we will be interested in the equations $(E_{\bZ/N\bZ})=(E_{N})$. Note that the resolution of $(E_{N})$ is linked to the the different writings of the elements of the congruence subgroup below :
\[\hat{\Gamma}(N):=\{C \in SL_{2}(\mathbb{Z}),~C= \pm Id~[N]\}.\]
Indeed, we know that all the matrices of $SL_{2}(\bZ)$ can be written in the form $M_{n}(a_1,\ldots,a_n)$, with $a_{i}$ a positive integer. Since this expression is not unique, we are naturally led to look for all the writings of this form for a given matrix, or a set of matrices. Note that we already have a lot of results concerning the solutions of $(E_{N})$ (see for example \cite{M1,M3}).
\\
\\ \indent Our objective in this article is to obtain the number of $\lambda$-quiddities of odd size, and an upper and lower bound of the number of $\lambda$-quiddities of even size, over the rings $\bZ/2^ {m}\bZ$. We will also give a complete formula for $\bZ/8\bZ$. For this, we write, for $n \geq 2$, $m \geq 2$ and $\epsilon \in \{-1,1\}$, $\Omega_{n}^{\epsilon}(m):=\{(a_{1},\ldots,a_{n}) \in (\mathbb{Z}/2^{m}\mathbb{Z})^{n},~M_{n}(a_{ 1},\ldots,a_{n})=\epsilon Id\}$, $w_{n,2^{m}}^{+}:=\left|\Omega_{n}^{1}(m) \right|$, $w_{n,2^{m}}^{-}:=\left|\Omega_{n}^{-1}(m)\right|$ and $w_{n,2^{m}}:=w_{n,2^{m}}^{+}+w_{n,2^{m}}^{-}$. We already have the following result :

\begin{theorem}[\cite{CM} Theorem 1.3]
\label{13}

Let $n\geq 3$.
\\
\\i) If $n$ is odd we have the following equality:
\[w_{n, 4}^{+}=w_{n, 4}^{-}=\frac{4^{n-2}-2^{n-3}}{3}.\]

\noindent ii) If $n$ is even then there exists $m \in \bM$ such that $n=2m$.
\begin{itemize}

\item If $m$ is even we have:
\[w_{n,4}^{+}=\frac{4^{n-2}+4 \times 2^{n-3}}{3}~~~and~~~w_{n,4 }^{-}=\frac{4^{n-2}- 2^{n-2}}{3}.\]
\item If $m$ is odd we have:
\[w_{n,4}^{+}=\frac{4^{n-2}- 2^{n-2}}{3}~~~and~~~w_{n,4}^{ -}=\frac{4^{n-2}+4 \times 2^{n-3}}{3}.\]

\end{itemize}

\end{theorem}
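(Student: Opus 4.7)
The natural strategy is to reduce modulo $2$ and invoke Theorem~\ref{11} with $q=2$ (where $+Id$ and $-Id$ coincide): the projection $\pi : \bZ/4\bZ \twoheadrightarrow \mathbb{F}_{2}$ sends every element of $\Omega_{n}^{1}(2) \cup \Omega_{n}^{-1}(2)$ to a $\lambda$-quiddity of $\mathbb{F}_{2}^{n}$, so the problem reduces to controlling the fibre of $\pi$ over each such quiddity. A first simplification comes from the identity $M_{n}(-a_{1},\ldots,-a_{n}) = (-1)^{n} J M_{n}(a_{1},\ldots,a_{n}) J^{-1}$, with $J = \mathrm{diag}(1,-1)$, which is verified by direct matrix computation; for odd $n$ the involution $(a_{i}) \mapsto (-a_{i})$ therefore exchanges $\Omega_{n}^{1}(2)$ and $\Omega_{n}^{-1}(2)$, yielding $w_{n,4}^{+} = w_{n,4}^{-}$ immediately.

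For the fibre count, I fix a $\lambda$-quiddity $(b_{1},\ldots,b_{n}) \in \mathbb{F}_{2}^{n}$ lifted to $\{0,1\}^{n} \subset (\bZ/4\bZ)^{n}$, write a general lift as $a_{i} = b_{i} + 2 c_{i}$ with $c \in \mathbb{F}_{2}^{n}$, and expand modulo $4$, the cross-terms with two or more factors of $2$ vanishing:
\[M_{n}(a) \equiv M_{n}(b) + 2 L(c) \pmod{4}, \qquad L(c) := \sum_{i=1}^{n} c_{i}\, A_{n}\cdots A_{i+1} E_{11} A_{i-1} \cdots A_{1} \pmod{2},\]
where $E_{11}$ is the elementary matrix and $L : \mathbb{F}_{2}^{n} \to M_{2}(\mathbb{F}_{2})$ is $\mathbb{F}_{2}$-linear. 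Setting $M_{n}(b) =: Id + 2 X_{b}$ in $SL_{2}(\bZ/4\bZ)$, the lift $(a_{i})$ belongs to $\Omega_{n}^{1}(2)$ iff $L(c) \equiv -X_{b} \pmod{2}$ and to $\Omega_{n}^{-1}(2)$ iff $L(c) \equiv Id - X_{b} \pmod{2}$. The $SL_{2}$-condition $\det(M_{n}(b) + 2L(c)) \equiv 1 \pmod{4}$, combined with $\mathrm{adj}(M_{n}(b)) \equiv Id \pmod{2}$, forces $\mathrm{Im}(L)$ into the trace-zero hyperplane of $M_{2}(\mathbb{F}_{2})$; both targets $-X_{b}$ and $Id - X_{b}$ automatically lie in that hyperplane.

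The heart of the argument is a dichotomy on $(b)$. For every $\lambda$-quiddity $(b) \neq (0,\ldots,0)$, I plan to prove that $Id \in \mathrm{Im}(L)$ (equivalently $\dim\mathrm{Im}(L) = 3$, so $\mathrm{Im}(L)$ exhausts the trace-zero hyperplane); both targets are then attained and the fibre contributes $2 \cdot 2^{n-3} = 2^{n-2}$ lifts, split evenly between $\Omega^{1}$ and $\Omega^{-1}$ via the involution $c \mapsto c + e_{i} + e_{j}$ for any pair $(i,j)$ with $P_{i} + P_{j} = Id$. On the other hand, the exceptional all-zero quiddity $(0,\ldots,0)$ --- which is a $\lambda$-quiddity precisely when $n$ is even --- satisfies $M_{n}(0,\ldots,0) = B^{n} = (-1)^{m} Id$ with $B = \begin{pmatrix} 0 & -1 \\ 1 & 0 \end{pmatrix}$, and a direct computation of the $P_{i}$'s gives $\mathrm{Im}(L) = \mathbb{F}_{2}E_{12} \oplus \mathbb{F}_{2}E_{21}$, a proper subspace of the trace-zero hyperplane consisting of off-diagonal matrices; so exactly one of the two targets is hit and the $2^{n-2}$ contributing lifts all go to $\Omega^{(-1)^{m}}(2)$.

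Summing up: writing $N_{n}$ for the number of $\lambda$-quiddities over $\mathbb{F}_{2}$ given by Theorem~\ref{11}, one obtains $w_{n,4}^{+} + w_{n,4}^{-} = 2^{n-2} N_{n}$. For odd $n$, $N_{n} = (2^{n-1}-1)/3$ combined with $w^{+}=w^{-}$ gives $w_{n,4}^{+} = w_{n,4}^{-} = (4^{n-2} - 2^{n-3})/3$. For even $n = 2m$, with $N_{n} = (2^{2m-1}+1)/3$, summing $(N_{n}-1)\cdot 2^{n-3}$ from the balanced quiddities and adding $2^{n-2}$ to $w_{n,4}^{(-1)^{m}}$ from the all-zero quiddity reproduces the four announced formulas. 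The main obstacle is establishing the dichotomy, that is, showing that the all-zero quiddity is the \emph{unique} exceptional one: this amounts to proving $Id \in \mathrm{Im}(L_{b})$ for every nonzero $\mathbb{F}_{2}$-quiddity, and will require a detailed combinatorial analysis of the rank-one matrices $P_{i}$ in terms of the word $(b_{1},\ldots,b_{n})$.
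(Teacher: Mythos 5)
First, a remark on the comparison itself: the paper does not prove Theorem~\ref{13} at all --- it is quoted from \cite{CM} (Theorem 1.3 there), so there is no internal proof to measure you against. The closest machinery in this paper (Propositions \ref{22}--\ref{27} and the proof of Theorem~\ref{prin1}) works by a completely different route, namely recursive reduction of the size $n$ via Lemma~\ref{form} according to whether $a_{2}$ is invertible. Your strategy --- fibering $\Omega_{n}^{\pm 1}(2)$ over the $\lambda$-quiddities of $\mathbb{F}_{2}$ via reduction mod $2$ and counting lifts through the linearization $M_{n}(b+2c)\equiv M_{n}(b)+2L(c)\pmod 4$ --- is therefore genuinely different. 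Its skeleton is sound: the expansion of $M_{n}(a)$ is correct, the two membership conditions $L(c)=X_{b}$ and $L(c)=Id+X_{b}$ are the right ones, the trace-zero constraint on $\mathrm{Im}(L)$ is correctly derived from $\det=1$, the computation of $\mathrm{Im}(L)$ for the all-zero quiddity is correct, and the final bookkeeping against Theorem~\ref{11} with $q=2$ does reproduce all four announced formulas (I checked: $(N_{n}-1)2^{n-3}$ plus the exceptional $2^{n-2}$ lands exactly on the stated values).

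However, there is a genuine gap, and you acknowledge it yourself: the dichotomy --- that $\dim_{\mathbb{F}_{2}}\mathrm{Im}(L_{b})=3$ for \emph{every} nonzero $\mathbb{F}_{2}$-quiddity $b$ --- carries essentially all of the mathematical content, and it is only announced (``I plan to prove'', ``will require a detailed combinatorial analysis''). Without it the count collapses entirely. Two further imprecisions cluster around this point. (a) ``$Id\in\mathrm{Im}(L)$'' is not equivalent to ``$\dim\mathrm{Im}(L)=3$'', and $Id\in\mathrm{Im}(L)$ alone would not even suffice: a priori a $2$-dimensional image containing $Id$ could miss \emph{both} targets $X_{b}$ and $Id+X_{b}$, in which case the quiddity $b$ would contribute $0$ rather than $2^{n-2}$; the statement you actually need is that the rank-one matrices $P_{i}$ span the full trace-zero hyperplane. (b) Theorem~\ref{11} is stated only for $n>4$, so the values $N_{3}$ and $N_{4}$ must be supplied separately for the cases $n=3,4$ of Theorem~\ref{13} (easy, but it should be said). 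As it stands, the proposal is a clean and interesting reduction of the theorem to an unproven key lemma, not a proof.
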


\noindent In this text, we will prove the two following results :

\begin{theorem}
\label{prin1}

Let $m \geq 2$ and $\epsilon \in \{-1,1\}$.
\\
\\i) Let $n \geq 2$. We have the following equality :
\[w_{2n+1,2^{m}}^{+}=w_{2n+1,2^{m}}^{-}=\frac{2^{2mn-2n-2m-1}(2^{2n+3}-8)}{3}.\]

\noindent ii) Let $n \geq 3$. We have the two following inequalities :
\begin{itemize}
\item $\left|\Omega_{2n}^{\epsilon}(m)\right| \geq \left|\Delta_{2n}^{\epsilon}(m)\right|+2^{m-1}\left|\Delta_{2n-1}^{\epsilon}(m)\right|+m2^{m-1}\left|\Delta_{2n-4}^{\epsilon}(m)\right|$;
\item $\left|\Omega_{2n}^{\epsilon}(m)\right| \leq \left|\Delta_{2n}^{\epsilon}(m)\right|+2^{m-1}\left|\Delta_{2n-1}^{\epsilon}(m)\right|+2^{2m-2}\left|\Delta_{2n-1}^{\epsilon}(m)\right|$.
\end{itemize}

\noindent with $\left|\Delta_{n}^{\epsilon}(m)\right|:=\frac{2^{mn-n-3m}(2^{n+1}+8 \times (-1)^{n+1})}{3}$.

\end{theorem}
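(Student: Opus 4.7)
My plan for part (i) is induction on $m \geq 2$, with Theorem~\ref{13} as base case and inductive step $w_{n,2^{m+1}}^{\epsilon} = 2^{n-3}\,w_{n,2^{m}}^{\epsilon}$ for every odd $n \geq 3$. Given a solution $(\bar{a}_1,\ldots,\bar{a}_n) \in (\bZ/2^m\bZ)^n$ of $(E_{2^m})$, each lift to $(\bZ/2^{m+1}\bZ)^n$ takes the form $a_i = \tilde{a}_i + 2^m b_i$ with $b_i \in \{0,1\}$ and $\tilde{a}_i$ a fixed representative. Since $2^{2m} \equiv 0 \pmod{2^{m+1}}$, a first-order expansion yields
\[
M_n(a_1,\ldots,a_n) \equiv M_n(\tilde{a}_1,\ldots,\tilde{a}_n) + 2^m \sum_{i=1}^n b_i D_i \pmod{2^{m+1}},
\]
with $D_i = A_n\cdots A_{i+1}\bigl(\begin{smallmatrix}1 & 0\\0 & 0\end{smallmatrix}\bigr) A_{i-1}\cdots A_1$. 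Writing $M_n(\tilde{a})=\epsilon Id + 2^m C$, the condition $M_n(a) \equiv \epsilon Id \pmod{2^{m+1}}$ becomes $\bar{C} + \sum_i b_i \bar{D}_i = 0$ in $M_2(\mathbb{F}_2)$, and the identity $\det M_n = 1$ forces $\bar{C}$ and every $\bar{D}_i$ into the $3$-dimensional subspace $\mathfrak{sl}_2(\mathbb{F}_2)$ of trace-$0$ matrices.

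The technical heart is a span lemma: for every $n \geq 3$ odd, $\operatorname{span}_{\mathbb{F}_2}(\bar{D}_1,\ldots,\bar{D}_n) = \mathfrak{sl}_2(\mathbb{F}_2)$. Granted this, the linear system admits exactly $2^{n-3}$ solutions $b \in \{0,1\}^n$ for every $\bar{C}$, so each mod-$2^m$ solution contributes $2^{n-3}$ lifts and the induction closes. To prove the lemma, I write $\bar{D}_i = P_i^{-1}FP_i$ with $P_i = \bar{A}_{i-1}\cdots\bar{A}_1 \in SL_{2}(\mathbb{F}_2) \cong S_3$ and $F = \bigl(\begin{smallmatrix}0 & 0\\1 & 0\end{smallmatrix}\bigr)$, so each $\bar{D}_i$ lies in the conjugacy class $\{F, E', H\}$ of non-zero nilpotent trace-zero matrices, which is readily seen to be a basis of $\mathfrak{sl}_2(\mathbb{F}_2)$. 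Direct computation yields $\bar{D}_1 = F$ and $\bar{D}_n = E'$ for every solution (using $P_n = \bar{A}_n^{-1} \in \{R, T^2\}$). It remains to show that some $\bar{D}_i = H$, equivalently that the walk $(P_i)$ visits $\{T, RT^2\}$. Starting from $P_1 = Id$, the only transitions keeping $(P_i)$ inside the complementary set $\{Id,c,R,T^2\}$ are $Id \leftrightarrow R$ via the step $R$, so confinement would force $\bar{a}_i = 0$ throughout and $R^n = Id$, which holds only when $n$ is even; this rules out confinement for odd $n$ and forces $H$ to appear.

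For part (ii) the same Taylor framework applies, but when $n$ is even the walk $(P_i)$ can legitimately remain inside $\{Id,c,R,T^2\}$ (notably along the all-zero mod-$2$ reduction $R^n = Id$), so the span of $(\bar{D}_i)$ may drop to dimension $2$. I would partition $\Omega_{2n}^{\epsilon}(m)$ according to the rank of this span and the compatibility of $\bar{C}$ with it: rank-$3$ solutions still lift uniformly with $2^{n-3}$ children and contribute the leading term $|\Delta_{2n}^{\epsilon}(m)|$, while rank-$2$ solutions lift with either $0$ or $2^{n-2}$ children. To handle the rank-$2$ contribution, I would iterate the reduction $(a_{i-1}, 0, a_{i+1}) \mapsto -(a_{i-1}+a_{i+1})$ at interior zeros, relating it to the smaller counts $|\Delta_{2n-1}^{\epsilon}(m)|$ and $|\Delta_{2n-4}^{\epsilon}(m)|$ with a multiplier $2^{m-1}$ per reduction (the number of choices for a zero entry's lift); the factor $m$ in the lower bound reflects the $m$ successive $2$-adic valuation levels at which a reduction can take place, while the factor $2^{2m-2}$ in the upper bound is the unconstrained estimate on compatible lifts at the last stage. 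The main obstacle is making this classification precise: the compatibility of $\bar{C}$ with a rank-$2$ span depends on the fine $2$-adic structure of the tuple in a way that appears to resist an exact closed form, which is precisely what forces (ii) to be stated as a range rather than an equality.
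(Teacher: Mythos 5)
Your part (i) is correct, and it takes a genuinely different route from the paper's. The paper proves (i) by combining the bijection $\Lambda_{2n}^{\epsilon}(m,u)\simeq\Omega_{2n-1}^{\epsilon}(m)$ (from Lemma \ref{form} i) and Proposition \ref{21} i)) with the closed form for $\left|\Delta_{2n}^{Id}(m)\right|$ coming from the two-term recursion of Proposition \ref{22} and Corollary \ref{24}; the variable in the induction is essentially the size $n$, with $m$ fixed. You instead fix the odd size and induct on $m$ via a Hensel-type lifting: I checked that the first-order expansion is legitimate (the quadratic terms carry $2^{2m}\equiv 0 \bmod 2^{m+1}$), that $\bar{C}$ and the $\bar{D}_i$ are indeed trace-zero, that $\bar{D}_i=P_i^{-1}FP_i$ with $\bar{D}_1=F$ and $\bar{D}_n=E'$, and that confinement of the walk to the complement of the coset conjugating $F$ to $H$ forces all $\bar{a}_i=0$ and hence $R^{n}=Id$, impossible for $n$ odd. (Your blanket claim that the \emph{only} in-set transition is $Id\leftrightarrow R$ is literally false --- from your element $c$ one can step back to $R$ --- but $c$ and $T^{2}$ are unreachable from $Id$ without first exiting the set, so the conclusion stands.) This gives exactly $2^{n-3}$ lifts per solution, which matches the ratio $w^{\epsilon}_{n,2^{m+1}}/w^{\epsilon}_{n,2^{m}}$ required by the stated formula with base case Theorem \ref{13}. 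Your argument is arguably more structural (it explains the factor $2^{n-3}$), while the paper's recursion is what also powers part (ii) and the $\bZ/8\bZ$ computation.

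Part (ii), however, is only a plan, and you acknowledge the gap yourself. Two concrete problems. First, the rank-$3$ stratum of your lifting map does not coincide with $\Delta_{2n}^{\epsilon}(m)$: for instance $\bar{a}_1=1$ already forces $P_2=\bar{A}_1$ to conjugate $F$ to $H$, so the span is full regardless of whether $a_2$ is a unit; hence your proposed leading term does not match the term $\left|\Delta_{2n}^{\epsilon}(m)\right|$ appearing in the statement. Second, you never actually derive the exact middle term $2^{m-1}\left|\Delta_{2n-1}^{\epsilon}(m)\right|$ or either bound; the factors $m2^{m-1}$ and $2^{2m-2}$ are only motivated heuristically. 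The paper's argument is entirely different and elementary: split $\Omega_{2n}^{\epsilon}(m)$ according to whether $a_2\in U(m)$, and when $a_2\notin U(m)$ according to whether $a_3\in U(m)$; the mixed case gives exactly $2^{m-1}\left|\Delta_{2n-1}^{\epsilon}(m)\right|$ via Lemma \ref{form} iii) and the bijection $x\mapsto xy-1$, while the case $a_2,a_3\notin U(m)$ is bounded above by $2^{2m-2}\left|\Delta_{2n-1}^{\epsilon}(m)\right|$ trivially and below by $m2^{m-1}\left|\Delta_{2n-4}^{\epsilon}(m)\right|$ by prepending $(-y,x,y,-x)$ with $xy=0$ and invoking the count of Lemma \ref{26}. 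None of these ingredients appear in your sketch, so part (ii) must be regarded as unproven.
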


\begin{theorem}
\label{prin2}

Let $n \geq 2$. We have the two following fomulas :
\[w_{2n+1,8}^{+}=w_{2n+1,8}^{-}=\frac{2^{6n-2n-7}(2^{2n+3}-8)}{3};\]
\[w_{2n,8}=28 \times 8^{n-2}+\frac{2^{4n-5}-2^{3n-3}+2^{6n-6}-2^{3n}}{3}.\]

\end{theorem}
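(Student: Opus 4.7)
The proof splits naturally into two parts.

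The first formula, for $w_{2n+1,8}^{\pm}$, is an immediate corollary of Theorem~\ref{prin1}(i): substituting $m=3$ into the expression $\frac{2^{2mn-2n-2m-1}(2^{2n+3}-8)}{3}$ gives exactly $\frac{2^{6n-2n-7}(2^{2n+3}-8)}{3}$, as claimed. Nothing further is needed.

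The second formula requires a refinement of the argument behind Theorem~\ref{prin1}(ii), since that result provides only bounds. My strategy is to fiber over the reduction morphism $\pi:\bZ/8\bZ\twoheadrightarrow\bZ/4\bZ$, which sends every element of $\Omega_{2n}^{\epsilon}(3)$ to an element of $\Omega_{2n}^{\epsilon}(2)$, whose cardinality is given by Theorem~\ref{13}. Writing each lift as $a_i=\bar a_i+4\varepsilon_i$ with $\varepsilon_i\in\{0,1\}$ and expanding each factor $\begin{pmatrix} a_i & -1 \\ 1 & 0 \end{pmatrix}=\begin{pmatrix} \bar a_i & -1 \\ 1 & 0 \end{pmatrix}+4\varepsilon_i\begin{pmatrix} 1 & 0 \\ 0 & 0 \end{pmatrix}$ in the product defining $M_{2n}$, the second-order corrections vanish modulo~$8$, and the equation $M_{2n}(a)=\epsilon\operatorname{Id}$ becomes an affine linear system over $\mathbb{F}_2$ in $(\varepsilon_1,\ldots,\varepsilon_{2n})$:
\[
M_{2n}(\bar a)+4\sum_{i=1}^{2n}\varepsilon_i\,T_i(\bar a)\equiv\epsilon\operatorname{Id}\pmod{8},
\]
where $T_i(\bar a)$ denotes the product $\bar A_{2n}\cdots\bar A_1$ with the $i$-th factor replaced by $\begin{pmatrix}1&0\\0&0\end{pmatrix}$. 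For each base tuple $\bar a\in\Omega_{2n}^{\epsilon}(2)$, the number of lifts is thus either $0$ or $2^{2n-r(\bar a)}$, depending on the rank and consistency of this system.

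The central task is then to partition $\Omega_{2n}^{\epsilon}(2)$ according to the rank (and consistency) of its associated system and to count the elements of each cell explicitly. This is where I expect the principal difficulty to lie: the partition must be fine enough to reproduce simultaneously the constant $28$ and the particular mixture of powers $2^{3n-3},2^{3n},2^{4n-5},2^{6n-6}$ that appear in the closed form; I anticipate that the cells correspond to natural subsets of $(E_4)$-solutions analogous to the $\Delta_n^{\epsilon}$ used in Theorem~\ref{prin1}. Once the cells and their cardinalities are determined, summing the contribution $2^{2n-r(\bar a)}$ over both $\epsilon=1$ and $\epsilon=-1$ and invoking Theorem~\ref{13} should, after routine algebraic simplification, yield the claimed formula for $w_{2n,8}$.
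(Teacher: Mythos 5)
The first formula is fine: it is indeed just Theorem~\ref{prin1}(i) with $m=3$, which is exactly what the paper does.

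For the even case, however, what you have written is a plan, not a proof, and the plan stops precisely where the work begins. Your lifting setup is sound: writing $a_i=\bar a_i+4\varepsilon_i$ and noting that all terms involving two or more factors of $4$ vanish modulo $8$ does reduce the lifting problem to an affine system $F(\bar a)+\sum_i\varepsilon_i T_i(\bar a)\equiv 0 \pmod 2$ over $\mathbb{F}_2$, so each $\bar a\in\Omega_{2n}^{\epsilon}(2)$ contributes $0$ or $2^{2n-r(\bar a)}$ lifts. But the entire content of the theorem is then concentrated in determining, for every base tuple, whether the system is consistent and what its rank is, and in counting the base tuples in each resulting cell. You explicitly defer this (``this is where I expect the principal difficulty to lie'') and offer no candidate stratification, no argument that the rank takes only finitely many controlled values, and no verification that the resulting sum reproduces the constant $28$ and the exponents $2^{4n-5},2^{3n-3},2^{6n-6},2^{3n}$. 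Without that, the formula for $w_{2n,8}$ is not established.

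For comparison, the paper avoids any rank analysis: it reuses the decomposition from the proof of Theorem~\ref{prin1}(ii), tabulates the possible values of $xy-1$ for the sixteen pairs of non-invertible $x,y$ in $\bZ/8\bZ$ (twelve give $-1$, four give $3$), exploits the symmetries $\left|\Lambda_{2n-1}^{-\epsilon}(3,x)\right|=\left|\Lambda_{2n-1}^{\epsilon}(3,-x)\right|$ and $\left|\Lambda_{n}^{B}(m,-1)\right|=\left|\Omega_{n-1}^{-B}(m)\right|$ (Proposition~\ref{25}) to convert the leftover terms into $8w_{2n-2,8}$, and then solves the resulting first-order recurrence $w_{2n,8}=2\left|\Delta_{2n}^{\epsilon}(3)\right|+12\left|\Delta_{2n-1}^{\epsilon}(3)\right|+8w_{2n-2,8}$ with initial value $w_{4,8}=28$ from Proposition~\ref{27}. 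If you want to pursue your route, you would need to supply the analogue of that recurrence: an explicit partition of $\Omega_{2n}^{\epsilon}(2)$ by the rank and consistency of the associated $\mathbb{F}_2$-system, together with the cardinality of each part. As it stands, the second half of the statement remains unproved.
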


\section{Proofs of the counting formulas}
\label{proof}

\subsection{Preliminary results}
\label{chap21}

The aim of this section is to provide some elements which will be useful in the proofs of our main theorems. In all this section, $U(m):=\{x \in \bZ/2^{m}\bZ,~x~{\rm invertible}\}$.

\begin{proposition}[\cite{CM} lemma 2.16 and proposition 2.18]
\label{21}

i) Let $A$ be a commutative and unitary ring. Let $n=2l \geq 4$ and $\lambda$ an invertible element of  $A$. Let $(a_{1},\ldots,a_{n}) \in A^{n}$. If $M_{n}(a_{1},\ldots,a_{n})=\epsilon Id$, with $\epsilon \in \{1,-1\}$, then $M_{n}(\lambda a_{1},\lambda^{-1} a_{2},\ldots,\lambda a_{2l-1},\lambda^{-1} a_{2l})=\epsilon Id$.
\\
\\ii) Let $A$ be a commutative and unitary ring. Let $n \in \bN^{*}$, $n$ odd. The application
\[\begin{array}{ccccc}  
\varphi_{n} & : & \{(a_{1},\ldots,a_{n}) \in A^{n},~M_{n}(a_{1},\ldots,a_{n})=Id\} & \longrightarrow & \{(a_{1},\ldots,a_{n}) \in A^{n},~M_{n}(a_{1},\ldots,a_{n})=-Id\} \\
 & & (a_{1},\ldots,a_{n}) & \longmapsto & (-a_{1},\ldots,-a_{n})  \\
\end{array}\] 
\noindent is a bijection.

\end{proposition}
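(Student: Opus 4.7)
Part (i) follows directly from Theorem \ref{prin1}(i): setting $m=3$ in $\frac{2^{2mn-2n-2m-1}(2^{2n+3}-8)}{3}$ produces $\frac{2^{4n-7}(2^{2n+3}-8)}{3}$, which equals the stated formula since $6n-2n-7=4n-7$. Nothing beyond this specialization is needed.

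Part (ii) is the substantive claim. At $m=3$ the general bounds of Theorem \ref{prin1}(ii) are not tight---the coefficients $m\cdot 2^{m-1}=12$ and $2^{2m-2}=16$ of the third summand differ, and one checks that the lower bound is strictly below $w_{6,8}/2$ already at $n=3$---so one must produce an exact count by another route. My plan is a fiberwise lifting analysis from $\bZ/4\bZ$ to $\bZ/8\bZ$. The reduction $\pi : \bZ/8\bZ \twoheadrightarrow \bZ/4\bZ$ sends $\Omega_{2n}^{\epsilon}(3)$ to $\Omega_{2n}^{\epsilon'}(2)$ (with $\epsilon'$ the image of $\epsilon$ modulo $4$); writing any lift of $(\bar a_1,\ldots,\bar a_{2n})$ as $a_i = \bar a_i + 4 b_i$ with $b_i \in \{0,1\}$ and expanding the product $M_{2n}(a_1,\ldots,a_{2n})$ modulo $8$ (so that cross-terms with two factors of $4$ vanish), the equation $M_{2n}=\epsilon\,\mathrm{Id}$ reduces to an affine $\bZ/2\bZ$-linear system in $(b_1,\ldots,b_{2n})$ whose matrix coefficients, rank and compatibility depend only on the parity pattern $(\bar a_i \bmod 2)_i$ and on $\epsilon$.

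Knowing these fiber sizes, I would partition $\Omega_{2n}^{\epsilon'}(2)$ by parity pattern, enumerate each class (combining Theorem \ref{13} with an auxiliary induction on $n$ that tracks how many $\bar a_i$'s are odd versus even), and multiply by the corresponding fiber count to obtain $w_{2n,8}^{\pm}$. Adding $w_{2n,8}^{+}+w_{2n,8}^{-}$ and simplifying should then recover $28 \cdot 8^{n-2} + \frac{2^{4n-5}-2^{3n-3}+2^{6n-6}-2^{3n}}{3}$, with the dominant term $\frac{2^{6n-6}}{3}$ arising from generic parity patterns (where the affine system is of maximum rank and either admits $2^{2n-2}$ lifts or none) and the $28 \cdot 8^{n-2}$ piece from the special, low-rank patterns that lift for both signs of $\epsilon$.

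The main obstacle is the parity-pattern classification itself. It is already implicit in the proof of the bounds in Theorem \ref{prin1}(ii)---those bounds correspond to the extremal cases where the affine system in the $b_i$'s has maximal or nearly maximal defect---but pinpointing the exact number of elements of $\Omega_{2n}^{\epsilon'}(2)$ falling into each defect class is delicate, since these numbers are not directly given by Theorem \ref{13} and require a finer enumeration. For $m=3$ the classification should collapse to a manageable number of cases because $2^{m-1}=4$ is small enough that all relevant residue patterns can be listed explicitly and handled by a short induction, whereas for $m\geq 4$ the combinatorial explosion is precisely what forces Theorem \ref{prin1}(ii) to settle for inequalities rather than an equality.
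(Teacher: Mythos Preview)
Your proposal does not address the stated proposition at all. Proposition~\ref{21} asserts two elementary algebraic facts about the matrices $M_n$ over an arbitrary commutative unitary ring $A$: part~(i) says the solution set of $M_n=\epsilon\,Id$ is stable, for even $n$, under the rescaling $(a_1,a_2,\ldots)\mapsto(\lambda a_1,\lambda^{-1}a_2,\ldots)$ by a unit $\lambda$; part~(ii) says that for odd $n$ the sign flip $(a_i)\mapsto(-a_i)$ bijects the $+Id$ and $-Id$ solution sets. Neither statement mentions $\bZ/2^m\bZ$, $m=3$, or any counting formula. In the paper this proposition is simply quoted from \cite{CM} with no proof and is used as an \emph{input} to Theorems~\ref{prin1} and~\ref{prin2}, not derived from them; invoking those theorems to establish it would be circular.

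What you have actually written is a sketch toward Theorem~\ref{prin2}. Even read that way it is incomplete: you outline a lifting analysis along $\pi:\bZ/8\bZ\to\bZ/4\bZ$ but do not carry out the parity-pattern classification you yourself identify as the crux, so no formula is ever produced. For the record, the paper's argument for Theorem~\ref{prin2}(ii) proceeds quite differently: it refines the decomposition from the proof of Theorem~\ref{prin1}(ii), tabulates the sixteen values of $xy-1$ for $x,y\notin U(3)$, uses the sign-flip bijection of Proposition~\ref{21}(ii) and Proposition~\ref{25} to collapse the remaining $\Lambda$-terms into $w_{2n-2,8}$, and thereby obtains the recurrence $w_{2n,8}=2\bigl|\Delta_{2n}^{\epsilon}(3)\bigr|+12\bigl|\Delta_{2n-1}^{\epsilon}(3)\bigr|+8\,w_{2n-2,8}$, which it then solves in closed form.

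A correct proof of Proposition~\ref{21} itself is a short matrix computation. For (i), with $Q=\mathrm{diag}(1,\lambda)$ one checks on each consecutive pair of factors that $M_n(\lambda a_1,\lambda^{-1}a_2,\ldots,\lambda a_{2l-1},\lambda^{-1}a_{2l})=Q\,M_n(a_1,\ldots,a_n)\,Q^{-1}$, which equals $\epsilon\,Id$ whenever $M_n(a_1,\ldots,a_n)$ does. For (ii), with $D=\mathrm{diag}(1,-1)$ one has $M_n(-a_1,\ldots,-a_n)=(-1)^n D\,M_n(a_1,\ldots,a_n)\,D^{-1}$, so for odd $n$ the map $\varphi_n$ is well defined and is its own inverse up to sign.
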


\begin{lemma}
\label{form}
Let $A$ be a commutative and unitary ring and $(a,b,c,u,v) \in A^{5}$.
\\
\\i) $M_{3}(a,1,b)=M_{2}(a-1,b-1)$. 
\\
\\ii) $M_{3}(a,-1,b)=-M_{2}(a+1,b+1)$.
\\
\\iii) We suppose $uv-1$ invertible. $M_{4}(a,u,v,b)=M_{3}(a+(1-v)(uv-1)^{-1},uv-1,b+(1-u)(uv-1)^{-1})$.
\\
\\iv) (\cite{C}) We suppose $v$ invertible and $x=((vb-1)(uv-1)-1)v^{-1}$ invertible. 
\[M_{5}(a,u,v,b,c)=M_{3}(a-(vb-2)x^{-1},x,c-(uv-2)x^{-1}).\]

\end{lemma}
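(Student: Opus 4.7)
My plan is to verify each of the four identities by direct multiplication: expand both sides as $2\times 2$ matrices and compare the four entries. As a preliminary reusable tool I would record once and for all the closed form
\[M_{3}(p,q,r)=\begin{pmatrix} pqr-p-r & 1-qr \\ pq-1 & -q \end{pmatrix},\]
derived from one direct computation of three matrix factors; everything else is bookkeeping.

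For (i) and (ii), the formula above with $q=\pm 1$ gives
\[M_{3}(a,\pm 1,b)=\begin{pmatrix} \pm ab-a-b & 1\mp b \\ \pm a-1 & \mp 1 \end{pmatrix},\]
and this agrees on the nose with $\pm M_{2}(a\mp 1,b\mp 1)$ (use the analogous closed form $M_{2}(p,q)=\begin{pmatrix} pq-1 & -q \\ p & -1 \end{pmatrix}$). No invertibility hypothesis is needed.

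For (iii), I set $w=uv-1$, $\alpha=a+(1-v)w^{-1}$, $\beta=b+(1-u)w^{-1}$ (all well-defined by hypothesis). I would compute $M_{4}(a,u,v,b)$ once by left-multiplying $M_{3}(a,u,v)$ by $\begin{pmatrix} b & -1 \\ 1 & 0 \end{pmatrix}$ and then compare entry by entry with $M_{3}(\alpha,w,\beta)$ via the closed form. The bottom row matches immediately, and the top row is an elementary manipulation that absorbs the factor $w^{-1}$.

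For (iv) the same strategy works but is bulkier. I would compute $M_{5}(a,u,v,b,c)$ directly by one further left multiplication, obtaining polynomial expressions in $a,b,c,u,v$ for its four entries. On the other side I would expand $M_{3}(\alpha,x,\gamma)$ with $\alpha:=a-(vb-2)x^{-1}$ and $\gamma:=c-(uv-2)x^{-1}$ via the same closed form. The two bottom entries $\alpha x-1$ and $-x$, and the top-right entry $1-x\gamma$, match on the nose once one rewrites $x=uvb-b-u$, which is exactly what the factor $v^{-1}$ in the definition of $x$ achieves. The top-left entry reduces to verifying the polynomial identity
\[(uv-2)(vb-2)+vb+uv-4=v\,x,\]
which I regard as the one non-trivial algebraic step: it collapses the cross terms of $\alpha x\gamma$ and produces the required agreement. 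The main obstacle is the bookkeeping in this fivefold expansion, not any conceptual difficulty; in particular I would deliberately avoid deriving (iv) from (iii) applied twice, as that would impose the extraneous assumption that $uv-1$ itself be invertible, whereas (iv) only requires $v$ and $x$ to be units.
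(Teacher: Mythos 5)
Your proposal is correct and matches the paper's proof, which simply states that the identities ``can be verified by direct computations'' (citing \cite{CH} for i)--iii) and M.~Cuntz for iv)); your closed form for $M_3(p,q,r)$ and the key identity $(uv-2)(vb-2)+vb+uv-4=vx$ check out, as does the observation that $x=uvb-b-u$. Your remark that deriving iv) from iii) would impose the extraneous hypothesis that $uv-1$ be invertible is also apt, since the paper applies iv) precisely in the case where $uv-1$ is \emph{not} invertible.
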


\begin{proof}

These formulas can be verified by direct computations. Note that i), ii) and iii) are given in the section 4 of \cite{CH}. iv) was an important formula obtained by M.\ Cuntz during the preparation of \cite{CM}.

\end{proof}

\begin{proposition}
\label{22}

Let $N=2^{m}$, $m \geq 2$, $B \in SL_{2}(\mathbb{Z}/N\mathbb{Z})$ and $n>4$. We define the set $\Delta_{n}^{B}(m):=\{(a_{1},\ldots,a_{n}) \in (\mathbb{Z}/2^{m}\mathbb{Z})^{n},~M_{n}(a_{1},\ldots,a_{n})=B~and~a_{2} \in U(m)\}$. We have the following equality :
\[\left|\Delta_{n}^{B}(m)\right|=2^{m-1}\left|\Delta_{n-1}^{B}(m)\right|+2^{2m-1}\left|\Delta_{n-2}^{B}(m)\right|.\]

\end{proposition}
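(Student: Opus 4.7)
The plan is to partition $\Delta_n^B(m)$ according to the parity of $a_3$ and to reduce each piece using one of the identities from Lemma~\ref{form}; the two resulting contributions will match the two terms in the claimed recursion.

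If $a_3$ is even, then $a_2a_3-1$ is odd (since $a_2\in U(m)$) and hence invertible, so Lemma~\ref{form}(iii) applied to the first four coordinates gives $M_n(a_1,\ldots,a_n)=M_{n-1}(a_1',a_2',a_4',a_5,\ldots,a_n)$ with $a_2'=a_2a_3-1\in U(m)$; the reduced $(n-1)$-tuple therefore belongs to $\Delta_{n-1}^B(m)$. Inverting this map amounts to solving $a_2a_3=b_2+1$ with $a_2\in U(m)$: since $b_2+1$ is even, each of the $2^{m-1}$ units $a_2$ yields a unique even $a_3$, and $a_1,a_4$ are then determined by the lemma's formulas. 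The fibres therefore have size $2^{m-1}$, contributing $2^{m-1}\left|\Delta_{n-1}^B(m)\right|$.

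If $a_3$ is odd, both $a_2,a_3$ are units. A short parity check shows that $x=((a_3a_4-1)(a_2a_3-1)-1)a_3^{-1}$ is always a unit (the product $(a_3a_4-1)(a_2a_3-1)$ is even whatever the parity of $a_4$, so subtracting $1$ yields an odd integer, and dividing by the odd $a_3$ keeps it odd). Lemma~\ref{form}(iv) then applies to the first five coordinates and yields $M_n(a_1,\ldots,a_n)=M_{n-2}(c_1,x,c_2,a_6,\ldots,a_n)$, landing in $\Delta_{n-2}^B(m)$. Given a target $(b_1,\ldots,b_{n-2})$, the values $a_1$ and $a_5$ are uniquely recovered from $(a_2,a_3,a_4)$ through the formulas for $c_1,c_2$, so the fibre size equals the number of triples $(a_2,a_3,a_4)\in U(m)\times U(m)\times\mathbb{Z}/2^m\mathbb{Z}$ satisfying $x=b_2$.

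The main obstacle is this fibre count. The crucial step is algebraic: expanding $x=b_2$ and cancelling the unit $a_3$ reduces the condition to the clean symmetric form $a_2a_3a_4-a_2-a_4=b_2$. I would then split on the parity of $a_4$. When $a_4$ is odd, $a_2a_4$ is a unit and the equation solves uniquely for $a_3$, which automatically turns out to be odd, giving $2^{m-1}\cdot 2^{m-1}=2^{2m-2}$ solutions. When $a_4$ is even, $a_3a_4-1$ is a unit and rewriting as $a_2(a_3a_4-1)=a_4+b_2$ lets one solve uniquely for $a_2$, again automatically odd, giving another $2^{2m-2}$ solutions. The total $2^{2m-1}$ is independent of $b_2$ and contributes $2^{2m-1}\left|\Delta_{n-2}^B(m)\right|$; summing with the previous case yields the recursion.
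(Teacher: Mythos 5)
Your proof is correct, and it uses the same overall decomposition as the paper: split $\Delta_{n}^{B}(m)$ according to whether $a_{3}$ is a unit, reduce the even case via Lemma \ref{form} iii) to $\Delta_{n-1}^{B}(m)$ with fibres of size $2^{m-1}$, and reduce the odd case via Lemma \ref{form} iv) to $\Delta_{n-2}^{B}(m)$. Where you genuinely diverge is the crucial fibre count in the odd case, namely that for each unit $b_{2}$ the number of triples $(a_{2},a_{3},a_{4})\in U(m)\times U(m)\times\mathbb{Z}/2^{m}\mathbb{Z}$ with $((a_{3}a_{4}-1)(a_{2}a_{3}-1)-1)a_{3}^{-1}=b_{2}$ equals $2^{2m-1}$ \emph{independently of} $b_{2}$ --- the independence is essential, since (as the paper observes in a remark) the sets $\Lambda_{n-2}^{B}(m,x)$ of solutions with prescribed second entry $x$ do not all have the same cardinality. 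The paper proves this constancy structurally: a scaling bijection $(u,v,w)\mapsto(xu,vx^{-1},wx)$ identifies the fibre over $1$ with the fibre over $x$, and the common value $2^{2m-1}$ then falls out of dividing the total $2^{3m-2}$ by the $2^{m-1}$ possible values of $x$. You instead compute directly: the simplification of the defining expression to $a_{2}a_{3}a_{4}-a_{2}-a_{4}=b_{2}$ is correct, and your parity split on $a_{4}$ (solving uniquely for $a_{3}$ when $a_{4}$ is odd, for $a_{2}$ when $a_{4}$ is even, the solution landing automatically in $U(m)$ each time) correctly yields $2^{2m-2}+2^{2m-2}=2^{2m-1}$. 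Your route is more elementary and makes the value of the fibre explicit; the paper's is computation-free and exploits the equivariance of the map under the multiplicative action of $U(m)$, which is why it needs no case analysis.
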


\begin{proof}

Let $m \geq 2$, $B \in SL_{2}(\mathbb{Z}/2^{m}\mathbb{Z})$ and $n>4$. Let us begin by defining some elements :
\begin{itemize}
\item $\Omega_{n}^{B}(m):=\{(a_{1},\ldots,a_{n}) \in (\mathbb{Z}/2^{m}\mathbb{Z})^{n},~M_{n}(a_{1},\ldots,a_{n})=B\}$;
\item $\Delta_{n}^{B}(m):=\{(a_{1},\ldots,a_{n}) \in \Omega_{n}^{B}(m),~a_{2} \in U(m)\}$;
\item $\Lambda_{n}^{B}(m,x):=\{(a_{1},\ldots,a_{n}) \in \Omega_{n}^{B}(m),~a_{2}=x\}$;
\item $\begin{array}{ccccc} 
\psi : & U(m) \times U(m) \times \mathbb{Z}/2^{m}\mathbb{Z} & \longrightarrow & U(m) \\
  & (u,v,w) & \longmapsto & ((vw-1)(uv-1)-1)v^{-1}  \\
\end{array}$;
\item $T(m,x):=\{(u,v,w) \in U(m) \times U(m) \times \mathbb{Z}/2^{m}\mathbb{Z}, \psi(u,v,w)=x\}$, $x \in U(m)$.
\end{itemize}

\noindent We have the following equalities :
\begin{eqnarray*}
\Delta_{n}^{B}(m) &=& \bigsqcup_{u \in U(m)} \{(a_{1},\ldots,a_{n}) \in \Omega_{n}^{B}(m),~a_{2}=u\} \\
                  &=& \bigsqcup_{u \in U(m)} \underbrace{\{(a_{1},\ldots,a_{n}) \in \Omega_{n}^{B}(m),~a_{2}=u~{\rm and}~a_{3} \notin U(m)\}}_{X_{n}^{B}(u)} \\
									& & \bigsqcup_{u \in U(m)} \bigsqcup_{v \in U(m)} \underbrace{\{(a_{1},\ldots,a_{n}) \in \Omega_{n}^{B}(m),~a_{2}=u~{\rm and}~a_{3}=v\}}_{Y_{n}^{B}(u,v)}.
\end{eqnarray*}

\noindent Let $u=a+2^{m}\bZ \in U(m)$ ($a\in \bZ$). We consider the sets $X_{n}^{B}(u)$ and $Y_{n}^{B}(u,v)$ separately. 
\\
\\We begin by $X_{n}^{B}(u)$. Let $v=b+2^{m}\mathbb{Z}$ ($b \in \bZ$) be a non invertible element of $\mathbb{Z}/2^{m}\mathbb{Z}$. $b$ is an even integer. Hence, $ab$ is an even integer and $ab-1$ is an odd integer. Thus, $uv-1 \in U(m)$. By lemma \ref{form} iii), we can define the two following applications :

\[\begin{array}{ccccc} 
f_{n,u} : & X_{n}^{B}(u) & \longrightarrow & \Delta_{n-1}^{B}(m) \\
  & (a_{1},u,a_{3},\ldots,a_{n}) & \longmapsto & (a_{1}+(1-a_{3})(ua_{3}-1)^{-1},ua_{3}-1,a_{4}+(1-u)(ua_{3}-1)^{-1},a_{5},\ldots,a_{n})  \\
\end{array}\]
\noindent and
\[\begin{array}{ccccc} 
g_{n,u} : & \Delta_{n-1}^{B}(m) & \longrightarrow & X_{n}^{B}(u) \\
 & (a_{1},\ldots,a_{n-1}) & \longmapsto & (a_{1}+(u^{-1}(a_{2}+1)-1)a_{2}^{-1},u,u^{-1}(a_{2}+1),a_{3}+(u-1)a_{2}^{-1},a_{4},\ldots,a_{n-1})  \\
\end{array}.\]

\noindent $f_{n,u}$ and $g_{n,u}$ are reciprocal bijections. Hence, $\left|X_{n}^{B}(u)\right|=\left|\Delta_{n-1}^{B}(m)\right|$.
\\
\\Let $v=b+2^{m}\bZ$ ($b \in \bZ$) be an invertible elements of $\mathbb{Z}/2^{m}\mathbb{Z}$. Now, we consider $Y_{n}^{B}(u,v)$. We have :
\[Y_{n}^{B}(u,v)=\bigsqcup_{w \in \mathbb{Z}/2^{m}\mathbb{Z}} \underbrace{\{(a_{1},\ldots,a_{n}) \in \Omega_{n}^{B}(m), a_{2}=u,~a_{3}=v~{\rm and}~a_{4}=w\}}_{Z_{n}^{B}(u,v,w)}.\]
Hence, we will consider the sets $Z_{n}^{B}(u,v,w)$. Let $w$ be an element of $\mathbb{Z}/2^{m}\mathbb{Z}$. $a$ and $b$ are odd integers. So, $ab$ is an odd integer and $ab-1$ is an even integer. Hence, $uv-1$ is not invertible. Let $x=\psi(u,v,w)=((vw-1)(uv-1)-1)v^{-1}$. Since $uv-1$ is not invertible, $x$ is invertible. By lemma \ref{form} iv), we can define the two following applications :
\[\begin{array}{ccccc} 
h_{n,u,v,w} : & Z_{n}^{B}(u,v,w) & \longrightarrow & \Lambda_{n-2}^{B}(m,x) \\
  & (a_{1},u,v,w,a_{5},\ldots,a_{n}) & \longmapsto & (a_{1}-(vw-2)x^{-1},x,a_{5}-(uv-2)x^{-1},a_{6},\ldots,a_{n})  \\
\end{array}\]
\[{\rm and}~~\begin{array}{ccccc} 
k_{n,u,v,w} : & \Lambda_{n-2}^{B}(m,x) & \longrightarrow & Z_{n}^{B}(u,v,w) \\
 & (a_{1},\ldots,a_{n-2}) & \longmapsto & (a_{1}+(vw-2)x^{-1},u,v,w,a_{3}+(uv-2)x^{-1},a_{4},\ldots,a_{n-2})  \\
\end{array}.\]

\noindent $h_{n,u,v,w}$ and $k_{n,u,v,w}$ are reciprocal bijections. Hence, $\left|Z_{n}^{B}(u,v,w)\right|=\left|\Lambda_{n-2}^{B}(m,x)\right|$.
\\
\\Now, we give some elements about the sets $T(m,x)$. First, we have \[U(m) \times U(m) \times (\mathbb{Z}/2^{m}\mathbb{Z})=\bigsqcup_{x \in U(m)} \{(u,v,w) \in U(m) \times U(m) \times \mathbb{Z}/2^{m}\mathbb{Z}, \psi(u,v,w)=x\}.\]
\noindent Let $x \in U(m)$. We define two applications :
\[\begin{array}{ccccc} 
\alpha_{x} : & T(m,1) & \longrightarrow & T(m,x) \\
  & (u,v,w) & \longmapsto & (xu,vx^{-1},wx)  \\
\end{array}~~\begin{array}{ccccc}
\beta_{x} : & T(m,x) & \longrightarrow & T(m,1) \\
 & (u,v,w) & \longmapsto & (ux^{-1},vx,wx^{-1})  \\
\end{array}.\]

\noindent $\alpha_{x}$ and $\beta_{x}$ are reciprocal bijections. Hence, $\left|T(m,x)\right|=\left|T(m,1)\right|$. Moreover, 
\[2^{3m-2}=\left|U(m) \times U(m) \times (\mathbb{Z}/2^{m}\mathbb{Z})\right|=\sum_{x \in U(m)} \left|T(m,x)\right|=2^{m-1}\left|T(m,1)\right|.\] 
\noindent So, $\left|T(m,x)\right|=\left|T(m,1)\right|=2^{2m-1}$.
\\
\\If we collect all the preceedings elements, we have :
\begin{eqnarray*} 
\left|\Delta_{n}^{B}(m)\right| &=& \sum_{u \in U(m)} \left|X_{n}^{B}(u)\right|+\sum_{u, v \in U(m)} \left|Y_{n}^{B}(u,v)\right| \\
                               &=& \sum_{u \in U(m)} \left|\Delta_{n-1}^{B}(m)\right|+\sum_{u, v \in U(m)} \left(\sum_{w \in \mathbb{Z}/2^{m}\mathbb{Z}} \left|Z_{n}^{B}(u,v,w)\right|\right) \\
												       &=& \left|U(m)\right|\left|\Delta_{n-1}^{B}(m)\right|+\sum_{x \in U(m)} \left(\sum_{(u,v,w) \in T(m,x)} \left|Z_{n}^{B}(u,v,w)\right|\right) \\
															 &=& 2^{m-1} \left|\Delta_{n-1}^{B}(m)\right|+\sum_{x \in U(m)} \left(\sum_{(u,v,w) \in T(m,x)} \left|\Lambda_{n-2}^{B}(m,x)\right|\right). \\
															 &=& 2^{m-1} \left|\Delta_{n-1}^{B}(m)\right|+\sum_{x \in U(m)} \left|T(m,x)\right| \left|\Lambda_{n-2}^{B}(m,x)\right|. \\
															 &=& 2^{m-1} \left|\Delta_{n-1}^{B}(m)\right|+\left|T(m,1)\right|\sum_{x \in U(m)} \left|\Lambda_{n-2}^{B}(m,x)\right|. \\
														   &=& 2^{m-1} \left|\Delta_{n-1}^{B}(m)\right|+\left|T(m,1)\right|\left|\Delta_{n-2}^{B}(m)\right| \\
															 &=& 2^{m-1} \left|\Delta_{n-1}^{B}(m)\right|+2^{2m-1}\left|\Delta_{n-2}^{B}(m)\right|. \\
\end{eqnarray*}

\end{proof}

\begin{remark}
{\rm Let $x \neq y$ be two invertible elements of $\mathbb{Z}/2^{m}\mathbb{Z}$. In general, $\left|\Lambda_{n}^{B}(m,x)\right| \neq \left|\Lambda_{n}^{B}(m,y)\right|$. For instance, computationnaly, we find the following values: $\left|\Lambda_{5}^{Id}(3,1+8\mathbb{Z})\right|=20$ and $\left|\Lambda_{5}^{Id}(3,3+8\mathbb{Z})\right|=8$.
}
\end{remark}

\begin{proposition}
\label{23}

Let $N=2^{m}$, $m \geq 2$, $B \in SL_{2}(\mathbb{Z}/N\mathbb{Z})$ and $n>4$. We define the set $\Delta_{n}^{B}(m):=\{(a_{1},\ldots,a_{n}) \in (\mathbb{Z}/N\mathbb{Z})^{n},~M_{n}(a_{1},\ldots,a_{n})=B~and~a_{2} \in U(m)\}$. 
\[\left|\Delta_{n}^{B}(m)\right|=\frac{2^{mn-n-4m+1}(2^{n}+(-1)^{n}\times 8)}{3}\left|\Delta_{4}^{B}(m)\right|+\frac{2^{mn-n-3m}(2^{n}+(-1)^{n+1}\times 16)}{3}\left|\Delta_{3}^{B}(m)\right|.\]

\end{proposition}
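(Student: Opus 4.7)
\medskip

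The plan is to recognize that Proposition~\ref{22} gives a second-order linear recurrence with constant coefficients for the sequence $(|\Delta_n^{B}(m)|)_{n \geq 5}$, namely
\[
|\Delta_n^{B}(m)| = 2^{m-1} |\Delta_{n-1}^{B}(m)| + 2^{2m-1} |\Delta_{n-2}^{B}(m)|,
\]
and to solve it explicitly in closed form. The target formula is then just the standard closed form written in terms of the initial values $|\Delta_3^{B}(m)|$ and $|\Delta_4^{B}(m)|$. Since the recurrence is derived in Proposition~\ref{22} only under the hypothesis $n>4$, we indeed need two initial values, and $n=3,4$ are the natural choice here.

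First, I would compute the roots of the characteristic polynomial $X^2 - 2^{m-1} X - 2^{2m-1}$. Its discriminant is $2^{2m-2} + 2^{2m+1} = 9 \cdot 2^{2m-2}$, whose square root is $3 \cdot 2^{m-1}$, so the two roots are
\[
r_1 = \frac{2^{m-1} + 3 \cdot 2^{m-1}}{2} = 2^m, \qquad r_2 = \frac{2^{m-1} - 3 \cdot 2^{m-1}}{2} = -2^{m-1}.
\]
Since these are distinct, every solution of the recurrence has the form
\[
|\Delta_n^{B}(m)| = A \cdot 2^{mn} + B \cdot (-1)^n \cdot 2^{(m-1)n}
\]
for constants $A, B$ depending on $B$ and $m$ (but not on $n$).

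Second, I would pin down $A$ and $B$ by plugging in $n=3$ and $n=4$. Writing $|\Delta_3|$ and $|\Delta_4|$ for short, one gets the linear system
\[
|\Delta_3| = A \cdot 2^{3m} - B \cdot 2^{3m-3}, \qquad |\Delta_4| = A \cdot 2^{4m} + B \cdot 2^{4m-4}.
\]
Multiplying the first by $2^m$ and subtracting from the second yields $|\Delta_4| - 2^m |\Delta_3| = 3 B \cdot 2^{4m-4}$, hence
\[
B = \frac{|\Delta_4| - 2^m |\Delta_3|}{3 \cdot 2^{4m-4}}, \qquad A = \frac{2|\Delta_4| + 2^m |\Delta_3|}{3 \cdot 2^{4m}},
\]
the second identity coming from substituting $B$ back into either equation.

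Finally, I would substitute these expressions for $A$ and $B$ into $A \cdot 2^{mn} + B \cdot (-1)^n \cdot 2^{(m-1)n}$, regroup by $|\Delta_4|$ and $|\Delta_3|$, and identify the coefficients. Using $(-1)^n \cdot 2^{mn-n-4m+4} = 8\,(-1)^n \cdot 2^{mn-n-4m+1}$ and $-(-1)^n \cdot 2^{mn-n-3m+4} = 16\,(-1)^{n+1} \cdot 2^{mn-n-3m}$, the coefficient of $|\Delta_4|$ collapses to $\frac{2^{mn-n-4m+1}(2^n + 8(-1)^n)}{3}$ and the coefficient of $|\Delta_3|$ to $\frac{2^{mn-n-3m}(2^n + 16(-1)^{n+1})}{3}$, giving exactly the stated formula. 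There is no real obstacle beyond keeping track of the powers of $2$ cleanly; the arithmetic is routine once the right factorizations $2^{mn-4m+1} = 2^{mn-n-4m+1} \cdot 2^n$ and $2^{mn-3m} = 2^{mn-n-3m} \cdot 2^n$ are spotted.
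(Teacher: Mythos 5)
Your proof is correct and follows essentially the same route as the paper: the paper writes the recurrence of Proposition~\ref{22} in companion-matrix form and diagonalizes the matrix $\bigl(\begin{smallmatrix} 2^{m-1} & 2^{2m-1} \\ 1 & 0 \end{smallmatrix}\bigr)$, whose eigenvalues $2^{m}$ and $-2^{m-1}$ are exactly the characteristic roots you compute, and then propagates the initial data $\left|\Delta_{4}^{B}(m)\right|$, $\left|\Delta_{3}^{B}(m)\right|$. Your coefficients $A$ and $B$ and the final regrouping check out, so the two arguments differ only in presentation.
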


\begin{proof}

We set $A=\begin{pmatrix}
   2^{m-1} & 2^{2m-1} \\[4pt]
    1    & 0 
   \end{pmatrix}$ and $P=\begin{pmatrix}
   1 & 1 \\[4pt]
   \frac{1}{2^{m}} & \frac{-1}{2^{m-1}} 
   \end{pmatrix}$. $P^{-1}=\frac{-2^{m}}{3}\begin{pmatrix}
   \frac{-1}{2^{m-1}} & -1 \\[4pt]
   \frac{-1}{2^{m}} & 1
   \end{pmatrix}$. By the previous proposition, we have te following equality :
	\[\begin{pmatrix}
   \left|\Delta_{n}^{B}(m)\right|  \\[4pt]
    \left|\Delta_{n-1}^{B}(m)\right|  
   \end{pmatrix}=\begin{pmatrix}
   2^{m-1} & 2^{2m-1} \\[4pt]
    1    & 0 
   \end{pmatrix}\begin{pmatrix}
   \left|\Delta_{n-1}^{B}(m)\right|  \\[4pt]
    \left|\Delta_{n-2}^{B}(m)\right|  
   \end{pmatrix}=A^{n-4}\begin{pmatrix}
   \left|\Delta_{4}^{B}(m)\right|  \\[4pt]
    \left|\Delta_{3}^{B}(m)\right|  
   \end{pmatrix}.\]
	
\noindent Moreover, $A=P\begin{pmatrix}
   2^{m} & 0 \\[4pt]
   0 & -2^{m-1} 
   \end{pmatrix}P^{-1}$. Hence, 
	\[\left|\Delta_{n}^{B}(m)\right|=\frac{2^{mn-n-4m+1}(2^{n}+(-1)^{n}\times 8)}{3}\left|\Delta_{4}^{B}(m)\right|+\frac{2^{mn-n-3m}(2^{n}+(-1)^{n+1}\times 16)}{3}\left|\Delta_{3}^{B}(m)\right|.\]
\end{proof}

Let $S:=\begin{pmatrix}
     0 & -1 \\[4pt]
     1 & 0
    \end{pmatrix}$ and $T:=\begin{pmatrix}
     1 & 1 \\[4pt]
     0 & 1
    \end{pmatrix}$. $SL_{2}(\mathbb{Z})$ is generated by $S$ and $T$ and, for all $(a_{1},\ldots,a_{n}) \in \bZ^{n}$, $M_{n}(a_{1},\ldots,a_{n})=T^{a_{n}}S\ldots T^{a_{1}}S$.

\begin{corollary}
\label{24}
Let $m \geq 2$ and $n>4$. We have the followig formulas :
\begin{itemize}
\item $\left|\Delta_{n}^{Id}(m)\right|=\left|\Delta_{n}^{-Id}(m)\right|=\frac{2^{mn-n-3m}(2^{n+1}+8 \times (-1)^{n+1})}{3}$;
\item $\left|\Delta_{n}^{S}(m)\right|=\left|\Delta_{n}^{-S}(m)\right|=\frac{2^{mn-n-3m+1}(2^{n}+(-1)^{n}\times 8)}{3}$;
\item $\left|\Delta_{n}^{T}(m)\right|=\left|\Delta_{n}^{-T}(m)\right|=\left|\Delta_{n}^{Id}(m)\right|=\frac{2^{mn-n-3m}(2^{n+1}+8 \times (-1)^{n+1})}{3}$.
\end{itemize}

\end{corollary}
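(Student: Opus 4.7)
The plan is to use Proposition \ref{23} as a black box: it expresses $|\Delta_n^B(m)|$ for $n>4$ as a linear combination of $|\Delta_3^B(m)|$ and $|\Delta_4^B(m)|$, so the entire task reduces to computing these six pairs of base values explicitly, then substituting and simplifying.

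First I would handle $n=3$ by expanding the product directly. One checks
\[M_3(a_1,a_2,a_3)=\begin{pmatrix} a_3(a_1a_2-1)-a_1 & 1-a_2a_3 \\ a_1a_2-1 & -a_2 \end{pmatrix}.\]
Matching this with each target $B$ pins down $a_2$ from the bottom-right entry, then $a_1$ and $a_3$ from the remaining relations; the top-left entry is automatically satisfied because the determinant is $1$. For $B=\pm Id$ this forces $a_2=\mp 1$ (invertible), giving the unique solution $(\mp 1,\mp 1,\mp 1)$. For $B=\pm S$ it forces $a_2=0$, so the constraint $a_2\in U(m)$ is never met. For $B=\pm T$ one gets $a_2=\mp 1$ and a single solution. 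The upshot is
\[|\Delta_3^{Id}(m)|=|\Delta_3^{-Id}(m)|=|\Delta_3^{T}(m)|=|\Delta_3^{-T}(m)|=1,\qquad |\Delta_3^{S}(m)|=|\Delta_3^{-S}(m)|=0.\]

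Next, for $n=4$, I would use the decomposition $M_4(a_1,a_2,a_3,a_4)=\bigl(\begin{smallmatrix}a_4 & -1 \\ 1 & 0\end{smallmatrix}\bigr)M_3(a_1,a_2,a_3)$ to reduce the problem to $M_3(a_1,a_2,a_3)=B_{a_4}$ for a parameter matrix depending on $a_4$. Repeating the $n=3$ analysis inside this parametric family, each choice of $a_4\in\mathbb{Z}/2^m\mathbb{Z}$ yields a unique solution for $B=\pm S$ (no invertibility issue arises), while for $B\in\{\pm Id,\pm T\}$ the bottom-right equation leaves $a_2$ free among invertible elements and forces the other $a_i$ uniquely. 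Counting the invertible elements of $\mathbb{Z}/2^m\mathbb{Z}$ as $2^{m-1}$ gives
\[|\Delta_4^{\pm Id}(m)|=|\Delta_4^{\pm T}(m)|=2^{m-1},\qquad |\Delta_4^{\pm S}(m)|=2^m.\]

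Finally I would substitute these base values into the formula of Proposition \ref{23}. For $B=\pm Id$ and $B=\pm T$ the two contributions combine via the identity $(-1)^n\cdot 8+(-1)^{n+1}\cdot 16=8(-1)^{n+1}$, producing $\frac{2^{mn-n-3m}(2^{n+1}+8(-1)^{n+1})}{3}$. For $B=\pm S$ the second term vanishes (since $|\Delta_3^{\pm S}(m)|=0$) and the first contributes $\frac{2^{mn-n-3m+1}(2^n+(-1)^n\cdot 8)}{3}$. There is no genuine obstacle here — the work is entirely a bookkeeping computation — but the one place demanding care is the $n=3$ case analysis: several of the equations degenerate (e.g.\ $a_2a_3=0$ or $a_1a_2=0$), and one must use the invertibility of $a_2$ to conclude $a_1=a_3=0$ in those cases, which is exactly what allows the $T$ and $-T$ counts to coincide with the identity counts.
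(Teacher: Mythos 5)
Your proposal is correct and follows essentially the same route as the paper: the paper's proof consists precisely of plugging the base values $\left|\Delta_{3}^{\pm Id}(m)\right|=\left|\Delta_{3}^{\pm T}(m)\right|=1$, $\left|\Delta_{3}^{\pm S}(m)\right|=0$, $\left|\Delta_{4}^{\pm Id}(m)\right|=\left|\Delta_{4}^{\pm T}(m)\right|=2^{m-1}$, $\left|\Delta_{4}^{\pm S}(m)\right|=2^{m}$ into Proposition \ref{23}, which is exactly what you do. The only difference is that you additionally justify these base values by direct computation (the paper states them without proof), and your verification is sound --- apart from the harmless slip that in the $n=3$ case for $B=\pm T$ only $a_3$ (respectively the degenerate coordinate), not both $a_1$ and $a_3$, is forced to a value via $a_2a_3=0$.
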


\begin{proof}

We apply the formula given in the previous proposition with the values :
\begin{itemize}
\item $\left|\Delta_{4}^{Id}(m)\right|=\left|\Delta_{4}^{-Id}(m)\right|=2^{m-1}$, $\left|\Delta_{3}^{Id}(m)\right|=\left|\Delta_{3}^{-Id}(m)\right|=1$;
\item $\left|\Delta_{4}^{S}(m)\right|=\left|\Delta_{4}^{-S}(m)\right|=2^{m}$, $\left|\Delta_{3}^{S}(m)\right|=\left|\Delta_{3}^{-S}(m)\right|=0$; 
\item $\left|\Delta_{4}^{T}(m)\right|=\left|\Delta_{4}^{-T}(m)\right|=2^{m-1}$, $\left|\Delta_{3}^{T}(m)\right|=\left|\Delta_{3}^{-T}(m)\right|=1$.
\end{itemize}
\end{proof}

\noindent For instance, we have the following values :

\hfill\break

\begin{center}
\begin{tabular}{|c|c|c|c|c|c|c|c|c|}
\hline
  $n$     & 3 & 4 & 5 & 6 & 7 & 8 & 9 & 10  \rule[-7pt]{0pt}{18pt} \\
  \hline
  $\left|\Delta_{n}^{Id}(3)\right|$ & 1 & 4 & 48 & 320 & 2816 & 21~504 & 176~128 & 1~392~640    \rule[-7pt]{0pt}{18pt} \\
	\hline
  $\left|\Delta_{n}^{S}(3)\right|$ & 0 & 8 & 32 & 384 & 2560 & 22~528 & 172~032 & 1~409~024    \rule[-7pt]{0pt}{18pt} \\
	\hline
	
\end{tabular}
\end{center}

\hfill\break

\begin{proposition}
\label{25}

Let $N=2^{m}$, $m \geq 2$, $B \in SL_{2}(\mathbb{Z}/N\mathbb{Z})$ and $n \geq 3$. We define the sets $\Omega_{n}^{B}(m):=\{(a_{1},\ldots,a_{n}) \in (\mathbb{Z}/2^{m}\mathbb{Z})^{n},~M_{n}(a_{1},\ldots,a_{n})=B\}$ and $\Lambda_{n}^{B}(m,-1):=\{(a_{1},\ldots,a_{n}) \in \Omega_{n}^{B}(m),~a_{2}=-1\}$.
\[\left|\Lambda_{n}^{B}(m,-1)\right|=\left|\Omega_{n-1}^{-B}(m)\right|.\]

\end{proposition}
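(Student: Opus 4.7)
The plan is to exhibit an explicit bijection between $\Lambda_{n}^{B}(m,-1)$ and $\Omega_{n-1}^{-B}(m)$ by ``contracting'' the entry $a_{2}=-1$ via the identity of Lemma \ref{form} ii).

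More precisely, set $L(a):=\begin{pmatrix} a & -1 \\ 1 & 0 \end{pmatrix}$, so that $M_{n}(a_{1},\ldots,a_{n})=L(a_{n})\cdots L(a_{1})$. First I would use Lemma \ref{form} ii), rewritten as the matrix identity $L(a_{3})L(-1)L(a_{1})=-L(a_{3}+1)L(a_{1}+1)$, to obtain for any $(a_{1},-1,a_{3},\ldots,a_{n})$ the formula
\[M_{n}(a_{1},-1,a_{3},a_{4},\ldots,a_{n})=-M_{n-1}(a_{1}+1,a_{3}+1,a_{4},\ldots,a_{n}).\]
Consequently the first tuple lies in $\Lambda_{n}^{B}(m,-1)$ if and only if the second lies in $\Omega_{n-1}^{-B}(m)$.

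Next I would define
\[\Phi\colon\Lambda_{n}^{B}(m,-1)\longrightarrow\Omega_{n-1}^{-B}(m),\qquad (a_{1},-1,a_{3},\ldots,a_{n})\longmapsto(a_{1}+1,a_{3}+1,a_{4},\ldots,a_{n}),\]
together with the candidate inverse
\[\Psi\colon\Omega_{n-1}^{-B}(m)\longrightarrow\Lambda_{n}^{B}(m,-1),\qquad (b_{1},b_{2},\ldots,b_{n-1})\longmapsto(b_{1}-1,-1,b_{2}-1,b_{3},\ldots,b_{n-1}).\]
The displayed formula above shows that $\Phi$ and $\Psi$ both land in the claimed sets, and $\Phi\circ\Psi=\mathrm{id}$, $\Psi\circ\Phi=\mathrm{id}$ hold by inspection. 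This yields the equality of cardinalities.

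There is no real obstacle here: once Lemma \ref{form} ii) is invoked to collapse the three-matrix block $L(a_{3})L(-1)L(a_{1})$, the bijection is essentially forced, and the only care required is the bookkeeping of indices (the entries of $M_{n}$ appear in reverse order in the matrix product, so the contracted block is the rightmost one) and the appearance of the sign $-B$ instead of $B$ coming from the factor $-1$ in the identity.
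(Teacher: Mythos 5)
Your proposal is correct and matches the paper's proof exactly: the paper also invokes Lemma \ref{form} ii) to define the map $(a_{1},-1,a_{3},\ldots,a_{n})\mapsto(a_{1}+1,a_{3}+1,a_{4},\ldots,a_{n})$ and notes it is a bijection onto $\Omega_{n-1}^{-B}(m)$. Your version just spells out the inverse and the sign bookkeeping more explicitly.
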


\begin{proof}

By lemma \ref{form} ii), $(a_{1},\ldots,a_{n}) \in \Lambda_{n}^{B}(m,-1) \mapsto (a_{1}+1,a_{3}+1,a_{4},\ldots,a_{n}) \in \Omega_{n-1}^{-B}(m)$ is a bijection.

\end{proof}

\begin{lemma}
\label{26}

$m \geq 2$. $\left|\{(x,y) \in ((\mathbb{Z}/2^{m}\mathbb{Z})-U(m))^{2},~xy=0\}\right|=m2^{m-1}$.

\end{lemma}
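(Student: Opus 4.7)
The plan is to parametrize non-units by their $2$-adic valuation. The non-units of $\bZ/2^{m}\bZ$ are exactly the even residues, giving $|(\bZ/2^{m}\bZ)-U(m)|=2^{m-1}$. For a nonzero non-unit $x$, write $x=2^{a}u$ with $u$ odd and $1\leq a\leq m-1$; for $x=0$, set $a=m$. With this convention, a pair $(x,y)$ of non-units satisfies $xy=0$ in $\bZ/2^{m}\bZ$ if and only if $v_{2}(x)+v_{2}(y)\geq m$, which is the condition I would use throughout.

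Next I would count, for each $k$, the number $N_{k}$ of non-unit $x$ with $v_{2}(x)=k$. The multiples of $2^{k}$ form a subgroup of order $2^{m-k}$, and those of $2^{k+1}$ a subgroup of order $2^{m-k-1}$; subtracting gives $N_{k}=2^{m-k-1}$ for $1\leq k\leq m-1$, and $N_{m}=1$ (the single element $0$). The desired cardinal is then
\[\sum_{\substack{1\leq a,b\leq m\\ a+b\geq m}} N_{a}N_{b}.\]
I would split off the cases where $a=m$ or $b=m$ (which contribute $2^{m-1}+2^{m-1}-1$, subtracting $1$ for the doubly counted pair $(0,0)$), and then handle the remaining range $1\leq a,b\leq m-1$ with $a+b\geq m$.

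For the main sum, for each fixed $a\in\{1,\ldots,m-1\}$ the admissible $b$ run from $m-a$ to $m-1$, and $\sum_{b=m-a}^{m-1}2^{m-b-1}$ is a geometric sum equal to $2^{a}-1$. Plugging back, the inner product gives $2^{m-a-1}(2^{a}-1)=2^{m-1}-2^{m-a-1}$, and summing over $a$ from $1$ to $m-1$ yields $(m-1)2^{m-1}-(2^{m-1}-1)$. Adding the boundary contribution from the cases with a zero coordinate produces exactly $m\cdot 2^{m-1}$.

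There is no real conceptual obstacle here; the only care needed is bookkeeping, namely (a) treating $x=0$ uniformly with the convention $v_{2}(0)=m$ so one does not miscount the pair $(0,0)$, and (b) getting the formula $N_{k}=2^{m-k-1}$ right at the boundary $k=m-1$, where it correctly gives the single element $2^{m-1}$. Everything else is a direct geometric summation.
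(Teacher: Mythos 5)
Your proof is correct and follows essentially the same route as the paper: both split off the pairs with a zero coordinate (contributing $2^{m}-1$) and then count pairs of nonzero non-units by their $2$-adic valuations $k,l$ with $k+l\geq m$, using $2^{m-k-1}$ elements of each valuation and evaluating the resulting double geometric sum. The bookkeeping (including the value $2^{a}-1$ for the inner sum and the final total $m2^{m-1}$) checks out.
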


\begin{proof}
We have the following equalities :
\begin{eqnarray*}
R &=& \{(x,y) \in ((\mathbb{Z}/2^{m}\mathbb{Z})-U(m))^{2},~xy=0\} \\
  &=& \{(0,y),~y \in (\mathbb{Z}/2^{m}\mathbb{Z})-U(m)\} \sqcup \{(x,0),~x \in (\mathbb{Z}/2^{m}\mathbb{Z})-U(m)~{\rm and}~x \neq 0\} \\
	& & \sqcup \{(2^{k}a+2^{m}\mathbb{Z},2^{l}b+2^{m}\mathbb{Z}),~1 \leq k \leq m-1,~m-k \leq l \leq m-1,~a,b~{\rm odd},~0< 2^{k}a, 2^{l}b <2^{m} \}.
\end{eqnarray*}
\noindent Hence, \begin{eqnarray*}
\left|R\right| &=& 2^{m-1}+2^{m-1}-1+\sum_{k=1}^{m-1} 2^{m-k-1} \sum_{l=m-k}^{m-1} 2^{m-l-1} \\
               &=& 2^{m}-1+2^{2m-2}\sum_{k=1}^{m-1} \frac{1}{2^{k}} \frac{2}{2^{m-k}}(1-2^{-k}) \\
							 &=& 2^{m}-1+2^{m-1}(m-1-(1-2^{-m+1})) \\
							 &=& m2^{m-1}.
\end{eqnarray*}

\end{proof}

\begin{proposition}
\label{27}

$m \geq 2$. $w_{4,2^{m}}^{+}=(m+2)2^{m-1}$, $w_{4,2^{m}}^{-}=2^{m}$.

\end{proposition}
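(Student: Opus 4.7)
The plan is to use the factorisation $M_4(a_1,a_2,a_3,a_4)=M_2(a_3,a_4)M_2(a_1,a_2)$ together with the fact that $M_2(a_1,a_2)$ has determinant $1$, so its inverse is explicit. The equation $M_4=\epsilon Id$ then rewrites as $M_2(a_3,a_4)=\epsilon\, M_2(a_1,a_2)^{-1}$, and matching the four entries forces
\[a_3=-\epsilon a_1,\qquad a_4=-\epsilon a_2,\qquad a_1a_2=1-\epsilon,\]
one of the entry equations being automatic from the other three. Thus $w_{4,2^m}^{\epsilon}$ equals the number of pairs $(a_1,a_2)\in(\bZ/2^m\bZ)^2$ satisfying $a_1a_2=1-\epsilon$, since $a_3$ and $a_4$ are then uniquely prescribed.

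For $\epsilon=-1$ I would count the pairs with $a_1a_2=2$ by stratifying according to the $2$-adic valuation $k$ of $a_1$. A valuation check shows that only $k=0$ and $k=1$ can contribute: if $a_1$ is odd there are $2^{m-1}$ choices of $a_1$, each giving the unique $a_2=2a_1^{-1}$; if $a_1$ has valuation exactly $1$ there are $2^{m-2}$ such $a_1$, and $a_2$ is determined modulo $2^{m-1}$, hence has two lifts in $\bZ/2^m\bZ$. Summing yields $w_{4,2^m}^{-}=2^{m-1}+2^{m-1}=2^m$.

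For $\epsilon=1$ the count is the number of pairs with $a_1a_2=0$. I would split this into three disjoint subfamilies according to which of $a_1,a_2$ is invertible: pairs with $a_1\in U(m)$ (which forces $a_2=0$, contributing $2^{m-1}$ pairs), pairs with $a_1=0$ and $a_2\in U(m)$ (contributing $2^{m-1}$ pairs), and pairs with $a_1,a_2\notin U(m)$. Lemma~\ref{26} evaluates the last family as $m2^{m-1}$, and adding gives $w_{4,2^m}^{+}=(m+2)2^{m-1}$, as claimed.

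There is no serious obstacle here: the matrix identity does the heavy lifting by reducing the four-variable count to a single product equation in two variables, and Lemma~\ref{26} is tailored precisely to the remaining zero-divisor count. The only care needed is to verify the consistency of the four entry equations derived from $M_2(a_3,a_4)=\epsilon M_2(a_1,a_2)^{-1}$, which follows immediately from $\det M_2(a_1,a_2)=1$.
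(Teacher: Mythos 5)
Your proof is correct and follows essentially the same route as the paper: both reduce $M_4(a_1,a_2,a_3,a_4)=\epsilon Id$ to the single product equation $a_1a_2=1-\epsilon$ with $(a_3,a_4)=(-\epsilon a_1,-\epsilon a_2)$, then count pairs with product $0$ via Lemma~\ref{26} plus the two unit cases, and pairs with product $2$ by ruling out $2$-adic valuation $\geq 2$ (your valuation-$0$ and valuation-$1$ strata correspond exactly to the paper's ``$x$ invertible or $y$ invertible'' dichotomy). The only difference is presentational: you derive the parametrization from the factorization $M_4=M_2\cdot M_2$, whereas the paper states it directly.
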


\begin{proof}

i) $\Omega_{4}^{1}(m)=\{(-y,x,y,-x) \in (\mathbb{Z}/2^{m}\mathbb{Z})^{4},~xy=0\}$. Hence, by the previous lemma, we have the following equality :
\begin{eqnarray*}
\left|\Omega_{4}^{1}(m)\right| &=& \left|\{(x,y) \in ((\mathbb{Z}/2^{m}\mathbb{Z})-U(m))^{2},~xy=0\}\right|+\left|\{(x,0),~x \in U(m)\}\right|+\left|\{(0,y),~y \in U(m)\}\right| \\
                               &=& (m+2)2^{m-1}.
\end{eqnarray*}															

\noindent ii) $\Omega_{4}^{-1}(m)=\{(y,x,y,x) \in (\mathbb{Z}/2^{m}\mathbb{Z})^{4},~xy=2+2^{m}\mathbb{Z}\}$. Let $x=2a+2^{m}\mathbb{Z}$ and $y=2b+2^{m}\mathbb{Z}$, $(a,b) \in \mathbb{Z}^{2}$. $xy=4ab+2^{m}\mathbb{Z} \neq 2+2^{m}\mathbb{Z}$ since $4ab-2$ is not a multiple of $2^{m}$ ($m \geq 2$). So, $xy=2$ implies $x \in U(m)$ or $y \in U(m)$.
\\
\\Hence, we have the following equality :
\[\left|\Omega_{4}^{-1}(m)\right|=\left|\{(-2x^{-1},x,2x^{-1},-x),~x \in U(m)\}\right|+\left|\{(-y,2y^{-1},y,-2y^{-1}),~y \in U(m)\}\right|=2^{m}.\]

\end{proof}

\subsection{Proof of theorem \ref{prin1}}
\label{chap22}

Let $N=2^{m}$, $m \geq 2$, $n \geq 2$ and $\epsilon=\pm 1$. We define the sets :
\begin{itemize}
\item $\Omega_{n}^{\epsilon}(m):=\{(a_{1},\ldots,a_{n}) \in (\mathbb{Z}/2^{m}\mathbb{Z})^{n},~M_{n}(a_{1},\ldots,a_{n})=\epsilon Id\}$;
\item $\Delta_{n}^{\epsilon}(m):=\{(a_{1},\ldots,a_{n}) \in (\mathbb{Z}/2^{m}\mathbb{Z})^{n},~M_{n}(a_{1},\ldots,a_{n})=\epsilon Id~{\rm and}~a_{2} \in U(m)\}$;
\item $u \in U(m)$, $\Lambda_{n}^{\epsilon}(m,u)=\{(a_{1},\ldots,a_{n}) \in \Omega_{n}^{\epsilon}(m),~a_{2}=u\}$.
\end{itemize}

\noindent i) By proposition \ref{21} i) and lemma \ref{form} i), we can define the two following applications :
 \[\begin{array}{ccccc} 
\vartheta_{n,u} : & \Lambda_{2n}^{\epsilon}(m,u) & \longrightarrow & \Omega_{2n-1}^{\epsilon}(m) \\
  & (a_{1},u,a_{3},\ldots,a_{2n}) & \longmapsto & (a_{1}u-1,a_{3}u-1,a_{4}u^{-1},a_{5}u,\ldots,a_{2n}u^{-1})  \\
\end{array}\]
\[{\rm and}~~\begin{array}{ccccc} 
\theta_{n,u}  : & \Omega_{2n-1}^{\epsilon}(m) & \longrightarrow & \Lambda_{2n}^{\epsilon}(m,u) \\
 & (a_{1},\ldots,a_{2n-1}) & \longmapsto & ((a_{1}+1)u^{-1},u,(a_{2}+1)u^{-1},a_{3}u,\ldots,a_{2n-1}u)  \\
\end{array}.\]

\noindent $\vartheta_{n,u}$ and $\theta_{n,u}$ are reciprocal bijections. Hence, $\left|\Omega_{2n-1}^{\epsilon}(m)\right|=\left|\Lambda_{2n}^{\epsilon}(m,u)\right|$. So, $\left|\Lambda_{2n}^{\epsilon}(m,u)\right|=\left|\Lambda_{2n}^{\epsilon}(m,v)\right|$, for all $u,v \in U(m)$. 
\\
\\Besides, $\left|\Delta_{2n}^{\epsilon}(m)\right|=\sum_{u \in U(m)} \left|\Lambda_{2n}^{\epsilon}(m,u)\right|=2^{m-1}\left|\Omega_{2n-1}(m)\right|$. By corollary \ref{24}, 
\[w_{2n-1,2^{m}}^{+}=\left|\Omega_{2n-1}^{\epsilon}(m)\right|=\frac{2^{2mn-2n-3m}(2^{2n+1}+8 \times(-1)^{2n+1})}{3\times 2^{m-1}}=\frac{2^{2mn-2n-4m+1}(2^{2n+1}-8)}{3}.\]
\noindent So, by proposition \ref{21} ii), we have :
\[w_{2n+1,2^{m}}^{+}=w_{2n+1,2^{m}}^{-}=\frac{2^{2mn-2n-2m-1}(2^{2n+3}-8)}{3}.\]

\noindent ii) Let $n \geq 3$. We have the following equality :
\begin{eqnarray*}
\Omega_{2n}^{\epsilon}(m) &=& \Delta_{2n}^{\epsilon}(m) \bigsqcup_{x \notin U(m)}\bigsqcup_{y \in \mathbb{Z}/2^{m}\mathbb{Z}} \underbrace{\{(a_{1},\ldots,a_{2n}) \in \Omega_{2n}^{\epsilon}(m),~a_{2}=x,~a_{3}=y\}}_{G_{2n}^{\epsilon}(x,y)} \\
                          &=& \Delta_{2n}^{\epsilon}(m) \bigsqcup_{x \notin U(m)}\bigsqcup_{y \in U(m)} G_{2n}^{\epsilon}(x,y) \bigsqcup_{x \notin U(m)}\bigsqcup_{y \notin U(m)} G_{2n}^{\epsilon}(x,y). \\
\end{eqnarray*}

\noindent Let $x=a+2^{m}\mathbb{Z}$ ($a \in \mathbb{Z}$) be a non invertible element of $\mathbb{Z}/2^{m}\mathbb{Z}$. Let $y=b+2^{m}\mathbb{Z}$ ($b \in \mathbb{Z}$) . $a$ is an even integer. Hence, $ab$ is an even integer and $ab-1$ is an odd integer. Thus, $xy-1 \in U(m)$. By lemma \ref{form} iii), we can define the two following applications :

\[\begin{array}{ccccc} 
\sigma_{n,x,y} : & G_{2n}^{\epsilon}(x,y) & \longrightarrow & \Lambda_{2n-1}^{\epsilon}(m,xy-1) \\
  & (a_{1},x,y,a_{4},\ldots,a_{2n}) & \longmapsto & (a_{1}+(1-y)(xy-1)^{-1},xy-1,a_{4}+(1-x)(xy-1)^{-1},a_{5},\ldots,a_{2n})  \\
\end{array}\]

\[{\rm and}~~\begin{array}{ccccc} 
\tau_{n,x,y} : & \Lambda_{2n-1}^{\epsilon}(m,xy-1) & \longrightarrow & G_{2n}^{\epsilon}(x,y) \\
 & (a_{1},xy-1,a_{3},\ldots,a_{2n-1}) & \longmapsto & (a_{1}-(1-y)(xy-1)^{-1},x,y,a_{3}-(1-x)(xy-1)^{-1}, \\
               &                     &             & a_{4},\ldots,a_{2n-1})  \\
\end{array}~~~~~~.\]

\noindent $\sigma_{n,x,y}$ and $\tau_{n,x,y}$ are reciprocal bijections. Hence, $\left|G_{2n}^{\epsilon}(x,y)\right|=\left|\Lambda_{2n-1}^{\epsilon}(m,xy-1)\right|$.
\\
\\Let $y \in U(m)$, $x \in (\mathbb{Z}/2^{m}\mathbb{Z}-U(m)) \mapsto xy-1 \in U(m)$ is a bijection. Indeed, let $z \in U(m)$. The equation $xy-1=z$ has exactly one solution in $(\mathbb{Z}/2^{m}\mathbb{Z})-U(m)$ : $x=(z+1)y^{-1}$. Hence, 
\begin{eqnarray*}
\left|\bigsqcup_{x \notin U(m)}\bigsqcup_{y \in U(m)} G_{2n}^{\epsilon}(x,y)\right| &=& \sum_{y \in U(m)} \sum_{x \notin U(m)} \left|G_{2n}^{\epsilon}(x,y)\right|\\
                                                                                   &=& \sum_{y \in U(m)} \sum_{x \notin U(m)} \left|\Lambda_{2n-1}^{\epsilon}(m,xy-1)\right| \\
																																									 &=& \sum_{y \in U(m)} \sum_{z \in U(m)} \left|\Lambda_{2n-1}^{\epsilon}(m,z)\right| \\
																																									 &=& \sum_{y \in U(m)} \left|\Delta_{2n-1}^{\epsilon}(m)\right|\\
																																									 &=& 2^{m-1}\left|\Delta_{2n-1}^{\epsilon}(m)\right|.
\end{eqnarray*}

\noindent Moreover, $\left|\bigsqcup_{x,y \notin U(m)} G_{2n}^{\epsilon}(x,y)\right|=\sum_{x,y \notin U(m)} \left|G_{2n}^{\epsilon}(x,y)\right|=\sum_{x,y \notin U(m)} \left|\Lambda_{2n-1}^{\epsilon}(m,xy-1)\right|$. 
\\
\\So, $\left|\bigsqcup_{x,y \notin U(m)} G_{2n}^{\epsilon}(x,y)\right| \leq \sum_{x,y \notin U(m)} \left|\Delta_{2n-1}^{\epsilon}(m)\right|=2^{2m-2}\left|\Delta_{2n-1}^{\epsilon}(m)\right|$. 
\\
\\Besides, $\left|\bigsqcup_{x,y \notin U(m)} G_{2n}^{\epsilon}(x,y)\right| \geq \left|\bigsqcup_{x,y \notin U(m), xy=0} G_{2n}^{\epsilon}(x,y)\right| \geq m2^{m-1}\left|\Delta_{2n-4}^{\epsilon}(m)\right|$. Indeed, let $x,y$ be two non invertible elements of $\mathbb{Z}/2^{m}\mathbb{Z}$ verifying $xy=0$. $(-y,x,y,-x) \in \Omega_{4}^{1}(m)$. For all elements $(a_{1},\ldots,a_{2n-4}) \in \Delta_{2n-4}^{\epsilon}(m)$, $(-y,x,y,-x,a_{1},\ldots,a_{2n-4}) \in G_{2n}^{\epsilon}(x,y)$. So, $\left|G_{2n}^{\epsilon}(x,y)\right| \geq \left|\Delta_{2n-4}^{\epsilon}(m)\right|$. By combining this with the result of lemma \ref{26}, we have the desired inequality.
\\
\\Hence, $\left|\Delta_{2n}^{\epsilon}(m)\right|+2^{m-1}\left|\Delta_{2n-1}^{\epsilon}(m)\right|+m2^{m-1}\left|\Delta_{2n-4}^{\epsilon}(m)\right| \leq \left|\Omega_{2n}^{\epsilon}(m)\right| \leq \left|\Delta_{2n}^{\epsilon}(m)\right|+2^{m-1}\left|\Delta_{2n-1}^{\epsilon}(m)\right|+2^{2m-2}\left|\Delta_{2n-1}^{\epsilon}(m)\right|$. If we associate this inequality with the formula given in the corollary \ref{24}, we have the result given in the theorem.

\qed

\subsection{The case of $\mathbb{Z}/8\mathbb{Z}$}
\label{chap23}

We use the notations introduced in the previous section.
\\
\\We already have the formula for $w_{2n+1,8}^{+}=w_{2n+1,8}^{-}$. Let $n \geq 3$ and $\epsilon \in \{-1,1\}$ We will focus on $\left|\Omega_{2n}^{\epsilon}(3)\right|+\left|\Omega_{2n}^{-\epsilon}(3)\right|$. The proof of theorem \ref{prin1} gives us the following formula :
\[\left|\Omega_{2n}^{\epsilon}(3)\right|=\left|\Delta_{2n}^{\epsilon}(3)\right|+4\left|\Delta_{2n-1}^{\epsilon}(3)\right|+\sum_{x,y \notin U(3)} \left|\Lambda_{2n-1}^{\epsilon}(3,xy-1)\right|.\]

\noindent So, to have a complete formula, we have to study the value of $\sum_{x,y \notin U(3)} \left|\Lambda_{2n-1}^{\epsilon}(3,xy-1)\right|$. To do this, we will use the different possible values of $xy-1$ ($x,y \notin U(3)$) given in the following table :

\hfill\break

\begin{center}
\begin{tabular}{|c|c|c|c|c|}
\hline
  \multicolumn{1}{|c|}{\diagbox{$x$}{$y$}}     & $0+8\mathbb{Z}$ & $2+8\mathbb{Z}$ & $4+8\mathbb{Z}$ & $6+8\mathbb{Z}$   \rule[-7pt]{0pt}{18pt} \\
  \hline
  $0+8\mathbb{Z}$ & $-1+8\mathbb{Z}$ & $-1+8\mathbb{Z}$ & $-1+8\mathbb{Z}$ & $-1+8\mathbb{Z}$   \rule[-7pt]{0pt}{18pt} \\
	\hline
  $2+8\mathbb{Z}$ & $-1+8\mathbb{Z}$ & $3+8\mathbb{Z}$ & $-1+8\mathbb{Z}$ & $3+8\mathbb{Z}$    \rule[-7pt]{0pt}{18pt} \\
	\hline
  $4+8\mathbb{Z}$  & $-1+8\mathbb{Z}$ & $-1+8\mathbb{Z}$ & $-1+8\mathbb{Z}$ & $-1+8\mathbb{Z}$    \rule[-7pt]{0pt}{18pt} \\
	\hline
  $6+8\mathbb{Z}$  & $-1+8\mathbb{Z}$ & $3+8\mathbb{Z}$ & $-1+8\mathbb{Z}$ & $3+8\mathbb{Z}$  \rule[-7pt]{0pt}{18pt} \\
  \hline
	
\end{tabular}
\end{center}

\hfill\break

\noindent So, $\sum_{x,y \notin U(3)} \left|\Lambda_{2n-1}^{\epsilon}(3,xy-1)\right|=12\left| \Lambda_{2n-1}^{\epsilon}(3,-1+8\mathbb{Z})\right|+4\left| \Lambda_{2n-1}^{\epsilon}(3,3+8\mathbb{Z})\right|$. Hence,
\begin{eqnarray*}
w_{2n,8} &=& \left|\Omega_{2n}^{\epsilon}(3)\right|+\left|\Omega_{2n}^{-\epsilon}(3)\right| \\
         &=& \left|\Delta_{2n}^{\epsilon}(3)\right|+4\left|\Delta_{2n-1}^{\epsilon}(3)\right|+12\left| \Lambda_{2n-1}^{\epsilon}(3,-1+8\mathbb{Z})\right|+4\left| \Lambda_{2n-1}^{\epsilon}(3,3+8\mathbb{Z})\right| \\
				 &+& \left|\Delta_{2n}^{-\epsilon}(3)\right|+4\left|\Delta_{2n-1}^{-\epsilon}(3)\right| +12\left| \Lambda_{2n-1}^{-\epsilon}(3,-1+8\mathbb{Z})\right|+4\left| \Lambda_{2n-1}^{-\epsilon}(3,3+8\mathbb{Z})\right|.
\end{eqnarray*}

\noindent Let $x \in U(m)$, $(a_{1},\ldots,a_{2n-1}) \in \Lambda_{2n-1}^{-\epsilon}(3,x) \mapsto (-a_{1},\ldots,-a_{2n-1}) \in \Lambda_{2n-1}^{\epsilon}(3,-x)$ is a bijection (by proposition \ref{21} ii)). Thus, $\left| \Lambda_{2n-1}^{-\epsilon}(3,x)\right|=\left| \Lambda_{2n-1}^{\epsilon}(3,-x)\right|$. Besides, by corollary \ref{24}, for all $l \geq 5$ $\left|\Delta_{l}^{\epsilon}(3)\right|=\left|\Delta_{l}^{-\epsilon}(3)\right|$. Hence,
\begin{eqnarray*}
w_{2n,8} &=& 2\left|\Delta_{2n}^{\epsilon}(3)\right|+8\left|\Delta_{2n-1}^{\epsilon}(3)\right|+12\left| \Lambda_{2n-1}^{\epsilon}(3,-1+8\mathbb{Z})\right|+4\left| \Lambda_{2n-1}^{\epsilon}(3,3+8\mathbb{Z})\right| \\
				 & & +12\left| \Lambda_{2n-1}^{-\epsilon}(3,-1+8\mathbb{Z})\right|+4\left| \Lambda_{2n-1}^{\epsilon}(3,5+8\mathbb{Z})\right|. \\
				 &=& 2\left|\Delta_{2n}^{\epsilon}(3)\right|+8\left|\Delta_{2n-1}^{\epsilon}(3)\right|+12\left| \Lambda_{2n-1}^{\epsilon}(3,-1+8\mathbb{Z})\right|+4\left|\Delta_{2n-1}^{\epsilon}(3)\right| \\
				 & & +12\left| \Lambda_{2n-1}^{-\epsilon}(3,-1+8\mathbb{Z})\right|-4\left| \Lambda_{2n-1}^{\epsilon}(3,1+8\mathbb{Z})\right|-4\left| \Lambda_{2n-1}^{\epsilon}(3,-1+8\mathbb{Z})\right|. \\
				 &=& 2\left|\Delta_{2n}^{\epsilon}(3)\right|+12\left|\Delta_{2n-1}^{\epsilon}(3)\right|+12\left| \Lambda_{2n-1}^{\epsilon}(3,-1+8\mathbb{Z})\right|+12\left| \Lambda_{2n-1}^{-\epsilon}(3,-1+8\mathbb{Z})\right| \\
				 & & -4\left| \Lambda_{2n-1}^{-\epsilon}(3,-1+8\mathbb{Z})\right|-4\left| \Lambda_{2n-1}^{\epsilon}(3,-1+8\mathbb{Z})\right|. \\
				 &=& 2\left|\Delta_{2n}^{\epsilon}(3)\right|+12\left|\Delta_{2n-1}^{\epsilon}(3)\right|+8\left| \Lambda_{2n-1}^{\epsilon}(3,-1+8\mathbb{Z})\right|+8\left| \Lambda_{2n-1}^{-\epsilon}(3,-1+8\mathbb{Z})\right| \\
				 &=& 2\left|\Delta_{2n}^{\epsilon}(3)\right|+12\left|\Delta_{2n-1}^{\epsilon}(3)\right|+8w_{2n-2,8}~({\rm proposition}~\ref{25}). \\
\end{eqnarray*}

\noindent Hence, by corollary \ref{24}, we have the following equalities :
\begin{eqnarray*}
w_{2n,8} &=& 8^{n-2}w_{4,8}+\sum_{k=0}^{n-3} 8^{k}\frac{2^{4(n-k)-6}+7\times 2^{6(n-k)-9}}{3} \\
         &=& 28 \times 8^{n-2}+\frac{2^{4n-6}}{3}\sum_{k=0}^{n-3} \left(\frac{1}{2}\right)^{k} +\frac{7\times 2^{6n-9}}{3}\sum_{k=0}^{n-3} \left(\frac{1}{8}\right)^{k} \\
				 &=& 28 \times 8^{n-2}+\frac{2^{4n-5}}{3}(1-2^{2-n}) +\frac{2^{6n-6}}{3}(1-2^{6-3n}) \\
				 &=& 28 \times 8^{n-2}+\frac{2^{4n-5}-2^{3n-3}+2^{6n-6}-2^{3n}}{3}.
\end{eqnarray*}

\noindent Besides, this formula is already true for $n=2$.

\subsection{Numerical applications}
\label{chap24}

We can also have others formulas. Indeed, with the Chinese remainder theorem, we can easily prove the following results : 

\begin{corollary}
\label{28}

Let $k=p_{1}\ldots p_{r}$ with $p_{i}$ distinct prime numbers Let $m \geq 2$ and $n \geq 2$.  
\[w_{n,2^{m}k}^{+}:=|\{(a_{1},\ldots,a_{n})\in (\bZ/2^{m}k\bZ)^{n},~M_{n}(a_{1},\ldots,a_{n})=Id\}|=w_{n,2^{m}}^{+}u_{n,p_{1}}^{+}u_{n,p_{2}}^{+}\ldots u_{n,p_{r}}^{+}.\]
\[w_{n,2^{m}k}^{-}:=|\{(a_{1},\ldots,a_{n})\in (\bZ/2^{m}k\bZ)^{n},~M_{n}(a_{1},\ldots,a_{n})=-Id\}|=w_{n,2^{m}}^{-}u_{n,p_{1}}^{-}u_{n,p_{2}}^{-}\ldots u_{n,p_{r}}^{-}.\]

\end{corollary}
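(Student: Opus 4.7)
The plan is to reduce the statement to a direct application of the Chinese remainder theorem, exploiting the fact that the map $(a_1,\ldots,a_n) \mapsto M_n(a_1,\ldots,a_n)$ is defined by a polynomial formula with integer coefficients, hence commutes with ring homomorphisms. We assume that the primes $p_1,\ldots,p_r$ are odd (so that $\gcd(2^m,k)=1$); otherwise the integer $2^m k$ is not determined by the factor decomposition and the statement makes no sense.

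First, I would record the ring isomorphism
\[
\Phi \colon \mathbb{Z}/2^{m}k\mathbb{Z} \;\longrightarrow\; \mathbb{Z}/2^{m}\mathbb{Z}\times \mathbb{Z}/p_1\mathbb{Z}\times \cdots \times \mathbb{Z}/p_r\mathbb{Z}
\]
provided by the Chinese remainder theorem (available because the moduli are pairwise coprime), and extend it componentwise to tuples $(a_1,\ldots,a_n)$ and to $2\times 2$ matrices. Since the entries of $M_n(a_1,\ldots,a_n)$ are integer polynomials in $a_1,\ldots,a_n$, the induced map on $\mathrm{SL}_2$ satisfies
\[
\Phi\bigl(M_n(a_1,\ldots,a_n)\bigr) \;=\; \bigl(M_n(\Phi(a_1)_0,\ldots,\Phi(a_n)_0),\, M_n(\Phi(a_1)_1,\ldots,\Phi(a_n)_1),\,\ldots\bigr),
\]
where $\Phi(a_i)_j$ denotes the $j$-th coordinate of $\Phi(a_i)$.

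Next, I would observe that the identity matrix in $\mathrm{SL}_2(\mathbb{Z}/2^{m}k\mathbb{Z})$ corresponds under $\Phi$ to the tuple $(Id,Id,\ldots,Id)$ and that $-Id$ corresponds to $(-Id,-Id,\ldots,-Id)$; crucially, there is no "mixing" of signs possible, since the sign is a global scalar. Consequently, a tuple $(a_1,\ldots,a_n) \in (\mathbb{Z}/2^{m}k\mathbb{Z})^n$ satisfies $M_n(a_1,\ldots,a_n)=\epsilon\, Id$ if and only if each of its $r+1$ component tuples satisfies the corresponding equation $M_n = \epsilon\, Id$ over the respective factor ring. The bijection furnished by $\Phi^n$ therefore restricts to a bijection between the solution set over $\mathbb{Z}/2^{m}k\mathbb{Z}$ and the Cartesian product of the solution sets over $\mathbb{Z}/2^{m}\mathbb{Z}$ and the $\mathbb{Z}/p_i\mathbb{Z}$.

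Taking cardinalities yields both displayed formulas at once. There is no real obstacle here, just bookkeeping: the only point to verify carefully is the compatibility of $\Phi$ with the matrix equation, which is immediate from the polynomial nature of $M_n$, and the coherence of signs described above.
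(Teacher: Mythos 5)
Your proposal is correct and is exactly the argument the paper intends: the paper itself only remarks that the corollary follows ``with the Chinese remainder theorem,'' and your write-up supplies precisely that reduction, including the two points worth making explicit (that $M_n$ has polynomial entries so it commutes with the CRT isomorphism, and that the conditions $M_n=Id$ and $M_n=-Id$ are imposed separately so no sign-mixing across factors can occur). Your remark that the $p_i$ must be odd for the moduli to be coprime is a reasonable reading of the hypothesis that the paper leaves implicit.
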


We now give some values obtained with the formulas given in the theorems \ref{prin1} and \ref{prin2}. We begin with $w_{n,N}^{+}$ for $n$ odd :

\hfill\break

\begin{center}
\begin{tabular}{|c|c|c|c|c|c|}
\hline
  \multicolumn{1}{|c|}{\backslashbox{$n$}{\vrule width 0pt height 1.25em$N$}}     & 8 & 16 & 24 & 32 & 40  \rule[-7pt]{0pt}{18pt} \\
  \hline
  3 & 1 & 1 & 1 & 1 & 1    \rule[-7pt]{0pt}{18pt} \\
	\hline
  5 & 80 & 320 & 800 & 1280 & 2080   \rule[-7pt]{0pt}{18pt} \\
	\hline
  7  & 5376 & 86~016 & 489~216 & 1~376~256 & 3~499~776   \rule[-7pt]{0pt}{18pt} \\
	\hline
  9  & 348~160 & 22~282~240 & 285~491~200 & 1~426~063~360 & 5~666~652~160  \rule[-7pt]{0pt}{18pt} \\
  \hline

\end{tabular}
\end{center}

\hfill\break

\noindent Now, we consider $w_{n,8}$ :

\hfill\break

\begin{center}
\begin{tabular}{|c|c|c|c|c|c|c|c|c|c|}
\hline
  $n$     & 2 & 3 & 4 & 5 & 6 & 7 & 8 & 9 & 10  \rule[-7pt]{0pt}{18pt} \\
  \hline
  $w_{n,8}$ & 1 & 2 & 28 & 160 & 1440 & 10~752 & 88~320 & 696~320 & 5~605~376   \rule[-7pt]{0pt}{18pt} \\
	\hline
	
\end{tabular}
\end{center}

\hfill\break

\noindent {\bf Acknowledgements}. I am grateful to Michael Cuntz for enlightening discussions.

\appendix

\end{document}